\newcommand{\cC}{{C}}
  \def\sw#1{{\sb{(#1)}}}
  \def\suc#1{{\sp{(#1)}}}
  \def\<{{\langle}} 
  \def\>{{\rangle}} 
  \def\ra{{\triangleleft}} 
  \def\eps{\varepsilon}
  \def\note#1{{}} 
 \def\can{{\rm \textsf{can}}}
  \def\note#1{} 
  \def\cM{{\mathcal M}} 
  \def\cN{{\mathcal N}} 
   \def\cQ{{\mathcal Q}} 
  \def\cC{{\mathcal C}} 
  \def\cA{{\mathcal A}} 
  \def\cB{{\mathcal B}} 
  \def\cD{{\mathcal D}} 
  \def\cE{{\mathcal E}}
  \def\lrhom#1#2#3#4{{{\rm Hom}\sb{#1- #2}(#3,#4)}}
  \def\ttheta{{\tilde{\theta}}}
  \def\beq{\begin{equation}} 
  \def\eeq{\end{equation}}
  \def\id{\mathrm{id}}
  \def\ot{{\otimes}}
 \def\hchi{\hat{\chi}}
     \def \hPhi{\widehat{\Phi}}
      \def \hphi{\widehat{\phi}}
       \def \hchi{\widehat{\chi}}
 \def\ust{\underline{*}}
 \def\ubu{\underline{\bullet}}
  \newcounter{zlist}
  \newcounter{blist}
  \newcounter{rlist}
\def\stac#1{\raise-.2cm\hbox{$\stackrel{\displaystyle\otimes}{\scriptscriptstyle{#1}}$}}
\def\cten#1{\raise-.2cm\hbox{$\stackrel{\displaystyle\widehat{\otimes}}
{\scriptscriptstyle{#1}}$}}
  \def\Label#1{\label{#1}\ifmmode\llap{[#1] }\else 
  \marginpar{\smash{\hbox{\tiny [#1]}}}\fi} 
  \def\Label{\label}
  \newtheorem{proposition}{Proposition}[section]
  \newtheorem{lemma}[proposition]{Lemma} 
  \newtheorem{theorem}[proposition]{Theorem} 
  \theoremstyle{definition} 
  \newtheorem{definition}[proposition]{Definition}
  \theoremstyle{remark}
  \newcounter{c} 
  \newcommand{\etyk}[1]{\vspace{-7.4mm}$$\begin{equation}\Label{#1} 
  \addtocounter{c}{1}} 
  \renewcommand{\]}{\ifnum \value{c}=1 $$\else \end{equation}\fi} 
\newcommand{\Mor}{{\sf Mor}}
\newcommand{\Bicom}{{\sf Bicom}}
\def\ot{\otimes}
\def\id{\textrm{{\small 1}\normalsize\!\!1}}
\newcommand{\Cc}{\mathcal{C}}
\def\B{{\bf B}}
\def\M{{\bf M}}
\def\*C{{}^*\hspace*{-1pt}{\Cc}}
\def\text#1{{\rm {\rm #1}}}
\def\Set{\mathbf{Set}}
\def\vect{\mathbf{Vect}}
\def\mod{\mathbf{Mod}}
\def\comod{\mathbf{Comod}}
 \def\Int#1{{\mathbf{Cat}(#1)}} 
 \def\l{\mathbf{l}}
 \def\r{\mathbf{r}}
 \def\t{\mathbf{t}}
 \def\v{\mathbf{r}}
 \def\w{\mathbf{l}}
 \def\y{\mathbf{y}}
 \def\f{\mathbf{f}}
 \def\g{\mathbf{g}}
 \def\h{\mathbf{h}}
 \def\k{\mathbf{k}}
 \def\coten#1{\Box_{#1}} 
\def\Alg#1{\mathrm{Alg}_{#1}} 
\def\Bicom#1{\mathbf{Comod}{(#1)}} 
 \def\Intc#1{{\mathbf{Cat}^{\mathsf c}(#1)}}
 \def\1{\mathbf{1}}
 \def\KL#1{\mathbf{KL}{(#1)}}
\begin{document} 

\title{Internal Kleisli categories} 
 \author{Tomasz Brzezi\'nski}
 \address{ Department of Mathematics, Swansea University, 
  Singleton Park, \newline\indent  Swansea SA2 8PP, U.K.} 
  \email{T.Brzezinski@swansea.ac.uk}   
\author{Adrian Vazquez Marquez}
 \address{ Department of Mathematics, Swansea University, 
  Singleton Park, \newline\indent  Swansea SA2 8PP, U.K.} 
\email{397586@swansea.ac.uk}

    \date{October 2009} 
  \begin{abstract} 
  A construction of Kleisli objects in 2-categories of noncartesian internal categories or categories internal to monoidal categories is presented.
  \end{abstract} 
  \maketitle 
  
\section{Introduction}
Internal categories within monoidal categories have been introduced  and studied by M.\ Aguiar in his PhD thesis \cite{Agu:int} as a framework for analysing properties of quantum groups. By choosing the monoidal category $\M$ appropriately, algebraic structures of recent interest in Hopf algebra (or quantum group) theory, such as corings and $C$-rings, can be interpreted as internal categories. Internal categories can be organised into two different 2-categories. The first one, denoted $\Int\M$, has internal functors as 1-cells, and internal natural transformations as 2-cells. The second one, denoted $\Intc\M$, has internal cofunctors as 1-cells, and internal natural cotransformations as 2-cells. While a functor  can be interpreted as a push-forward of morphisms, a cofunctor can be interpreted as a lifting of morphisms from one internal category to the other. The aim of this paper is to study adjunctions and monads in both $\Int\M$ and $\Intc\M$ and to show that every such monad has a Klesili object.  

A monad (comonad) in the 2-category $\Int\M$, i.e., an internal functor with two natural transformations which satisfy the usual associativity and unitality conditions, is simply called an {\em internal monad} (respectively, an {\em internal comonad}). We show that every internal monad (comonad) arises from and gives rise to a pair of adjoint functors, by explicitly constructing the Kleisli internal category. A monad  in the 2-category $\Intc\M$, i.e., an internal cofunctor with two natural cotransformations which satisfy the usual associativity and unitality conditions, is  called an {\em internal opmonad}. Similarly as for internal monads, we show that every internal opmonad  arises from and gives rise to a pair of adjoint cofunctors, by explicitly constructing Kleisli objects in $\Intc\M$. 

The paper is organised as follows. In Section~\ref{sec.cat} we review internal categories and describe 2-categories $\Int\M$, $\Intc\M$, following \cite{Agu:int}. In Section~\ref{sec.ftm} we first review elements of the formal theory of monads of Street and Lack \cite{Str:for}, \cite{LacStr:for}, which is a formal framework for studying monads in any bicategory. We then prove that $\Int\M$, $\Intc\M$ and the vertical dual of $\Int\M$,  embed into the bicategory of Kleisli objects associated to particular bicategories. The existence of these embeddings is then used in Section~\ref{sec.Kle} to deduce the existence of Kleisli internal categories (for internal monads, opmonads and comonads). The paper is completed with explicit examples which interpret  internal adjunctions and Klesli objects as twisting of (co)rings, and make a connection between the internal Kleisli categories  and constructions familiar from Hopf algebra and Hopf-Galois theories.

\section{Noncartesian internal categories}\label{sec.cat}
\subsection{The definition of an internal category.}
Extending the classical approach to internal category theory, M.\ Aguiar introduced the following notion of an {\em internal category in a monoidal category} in \cite{Agu:int}. Let $\M = (\M,\ot, \1)$  be a monoidal category (with tensor product $\ot$ and unit object $\1$), {\em regular} in the sense that it has equalisers and $\ot$ preserves all equalisers. Typical examples we are interested in are the category $\vect_k$ of vector spaces over a field $k$ and the  \underline{opposite} of the category $\mod_k$ of modules over a commutative, associative, unital ring $k$, with the standard tensor products. As a rule, when dealing with monoidal categories, we do not write canonical associativity and left and right unitality isomorphisms nor the canonical morphisms embedded in definitions of equalisers, etc.

The regularity condition ensures that a bicategory $\Bicom \M$ of comonoids can be associated to $\M = (\M,\ot, \1)$. The objects (0-cells) in $\Bicom \M$ are coassociative and counital  comonoids $\cC = (C,\Delta_C, e_C)$ ($\Delta_C$ is the comultiplication, $e_C$ denotes the counit). The 1-cells $\cC \to \cD$ are $\cC$-$\cD$-bicomodules $\cM = (M , \lambda, \varrho)$. Here $\lambda: M\to C\ot M$ denotes the left $\cC$-coaction and $\varrho: M\to M\ot D$ is the right $\cD$-coaction that are counital, coassociative and commute with each other in the standard way. Bicolinear maps are the 2-cells of  $\Bicom \M$. The vertical composition (of 2-cells) is the same as the composition in $\M$. The horizontal composition (of 1-cells and 2-cells) is given by the {\em cotensor product}: Start with comonoids $\cC$, $\cD$, $\cE$, a $\cC$-$\cD$-bicomodule $\cM = (M , \lambda_M, \varrho_M)$ and a $\cD$-$\cE$-bicomodule $\cN = (N , \lambda_N, \varrho_N)$. Define $M\coten D N$ as the equaliser:
$$\xymatrix{ M\coten D N \ar[r]& M\ot N
\ar@<0.5ex>[rr]^-{\varrho_M\ot N}\ar@<-0.5ex>[rr]_-{M \ot \lambda_N} & &
M\ot \cD \ot  N }.$$
The $\cC$-$\cE$-bicomodule $\cM\coten \cD \cN = (M\coten D N, \lambda_M\coten D N, M\coten D\varrho_N)$ is then the composite 1-cell 
$$
\xymatrix{
\cC \ar[r]^\cM & \cD \ar[r]^\cN & \cE} .
$$

Given a map of comonoids $f: \cC\to \cD$, any left $\cC$ comodule $\cA = (A, \lambda)$ can be made into a left $\cD$-comodule with the induced coaction $(f\ot A)\circ \lambda$. We denote the object part of the resulting $\cD$-comodule by ${}^f\! A$ (obviously, ${}^f\!A$ is isomorphic to  $A$ in $\M$), and the comodule itself by ${}^f\! \cA$. Similarly, any right $\cC$-comodule $\cA = (A,\varrho)$ is a right $\cD$-comodule with coaction $(A\ot f)\circ \varrho$; we denote it by $\cA^f$. 

Let  $\cC = (C,\Delta_C, e_C)$ be a comonoid. Then the category of endo-1-cells on $\cC$ (in $\Bicom \M$) or the category of $\cC$-bicomodules ${}^\cC \M^\cC$ is a monoidal category with the monoidal product given by the  cotensor product $\coten C$ and with the unit object $\cC$ (understood as a $\cC$-bicomodule with both coactions provided by $\Delta_C$).  Following  \cite{Agu:int}, by an {\em internal category in $\M$} (with object of objects $\cC$) we mean a monoid in ${}^\cC \M^\cC$, that  is a $\cC$-bicomodule with an associative and unital composition. This means, an internal category  is  a pair: a comonoid $\cC =(C, \Delta_C, e_C)$ in $\M$ and a  monoid $\cA = (A,m_A,u_A)$ in a monoidal category of $\cC$-bicomodules ${}^\cC \M^\cC$ (with cotensor product). Here $(A,\lambda,\varrho)$ is the underlying object, the multiplication in $\cA$ is denoted by $m_A$ and the unit by $u_A$. $\cA$ is thought of as the {\em object of morphisms} (with composition $m_A$ and identity morphism $u_A$), while $\cC$ is understood as an object of objects. For an internal category we write $(\cC,\cA)$ to indicate both: objects and morphisms. The left coaction $\lambda: A\to C\ot A$ should be understood as determining the {\em codomain} and the right coaction $\varrho: A\to A\ot C$ as determining  the {\em domain}.  The notation $m_A^n$ is used to record the  $n-1$-fold composite of $m_A$, e.g.\ $m^2_A = m_A\circ (m_A\coten C A)$ etc.

In case $\M=\Set$  and  $\ot = \times$ (the cartesian monoidal category) the above defines the usual internal category in $\Set$. If $\M$ is the category of vector spaces over a field, then internal category in $\M$ coincides with the notion of a {\em $C$-ring}. Of special algebraic interest is the case $\M =  \mod_k^{op}$  (the opposite of the category of modules over a commutative ring $k$). The comonoids in $\mod_k^{op}$ are the same as the monoids in $\mod_k$, i.e.\ $k$-algebras $A$; bicomodules in $\mod_k^{op}$ become $A$-bimodules. The equalisers in $\mod_k^{op}$  are the same as coequalisers in $\mod_k$; consequently the cotensor products in $\mod_k^{op}$ coincide with the usual tensor products of bimodules. Multiplication and unit become comultiplication and a counit respectively. In a word: an internal category in $\mod_k^{op}$ is the same as an {\em $A$-coring}; see \cite{BrzWis:cor} for more details about corings. We return to  examples of this in Section~\ref{sec.ex}.

While the definition of a non-cartesian internal category follows quite naturally the cartesian case, the definition of internal functors and natural transformations leaves more scope for freedom. Two possible definitions are proposed in \cite{Agu:int} and we describe them both presently. 
 
 \subsection{Internal functors and the 2-category $\Int \M$}\label{sec.cat(m)}

An {\em internal functor}  or simply a {\em functor} $\f : (\cC,\cA)\to (\cD,\cB)$ is a pair $\f = (f_0,f_1)$, where $f_0: \cC \to \cD$ is a morphism of comonoids, and $f_1: {}^{f_0}\!A ^{f_0}\to B$ is a $\cD$-bicomodule map that is multiplicative and unital in the sense that the following diagrams commute:
$$
\xymatrix{
A\square_{C}A\ar[r]\ar[d]_{m_{A}}  & A^{f_{0}} \square_{D} {}^{f_{0}}\! A\ar[r]^-{f_{1}\square_{D}f_{1}} & B\square_{D}B\ar[d]^{m_{B}} \\
A \ar[rr]_{f_{1}}& & B,
}  \qquad 
\xymatrix{
C\ar[r]^{f_{0}}\ar[d]_{u_{A}} & D\ar[d]^{u_{B}} \\
A\ar[r]_{f_{1}} & B .
} 
$$
Internal functors are composed component-wise. 
In order to relieve the notation, in what follows we  write $V^\f$ for $V^{f_0}$ etc. 
Typically, we also denote the composition of morphisms in $\M$ by juxtaposition. The composition in $\M$ takes precedence over all other operations on morphisms in $\M$.
 
An {\em internal natural transformation} $
\alpha: \f \Rightarrow\g
$ is a morphism of $\cD$-bicomodules (i.e.\ a 2-cell in $\Bicom \M$)
$\alpha: {}^{\g}C^{\f}\to B$ such that
 the following diagram commutes
\begin{equation*}
\xymatrix{
 											& A\square_{C} C\ar[r]         & A^{\g}\square_{D}{}^{\g} C\ar[r]^{g_{1}\square_{D}\alpha}& B\square_{D}B\ar[rd]^{m_{B}}  &     & \\
A\ar[ru]^{\varrho}\ar[rd]_{\lambda}	&                                       & 																				 &  						&  B \\
        									& C\square_{C}A\ar[r]          & C^{\f}\square_{C}{}^{\f}\! A \ar[r]_{\alpha\square_{D}f_{1}}  & B\square_{D}B\ar[ru]_{m_{B}}. &    
}
\end{equation*}

Natural transformations can be composed in two different ways. The {\em vertical composition} is given by the {\em convolution product $*$}. That is, the composite of 
$\alpha:\f \Rightarrow \g$ and $\beta: \g \Rightarrow \h$ is 
\begin{equation*}
\beta*\alpha := m_{B} \circ (\beta \square_{D} \alpha) \circ \Delta_{C}.
\end{equation*} 
The {\em horizontal composition $\bullet$}, also known as the {\em Godement product}, is defined as follows. In the situation:
$$
\xy *+{(\cC,\cA)}="A",
+<3cm,0pt>*+{(\cD,\cB)}="B", 
+<3cm,0pt>*+{(\cD',\cB')}="C",
"A";"B",{\twocell^\f_\g{\alpha}},
"B";"C",{\twocell^\h_\k{\beta}}
\endxy
$$
 the Godement product $\beta\bullet\alpha: \h\f \Rightarrow \k\g$ is defined as
\begin{equation*}
\beta\bullet \alpha : = \beta g_{0}* h_{1}\alpha = k_{1}\alpha * \beta f_{0}
\end{equation*}

With any internal functor  $\f : (\cC,\cA)\to (\cD,\cB)$ one associates the {\em identity natural transformation on $\f$}. This is given as 
$$
f:= f_1\circ u_A = u_B \circ f_0 : C\to B.
$$
The second equality follows by the unitality of functors. In view of the unitality of multiplications, $\alpha * f =\alpha$ and $f*\beta = \beta$, for any natural transformations $\alpha$, $\beta $ with domain and codomain $\f$, respectively. 

A collection of internal categories in $\M = (\M,\ot, \1)$ together with internal functors (with composition) and internal natural transformations (with vertical and horizontal compositions, and the identity natural trasformations as units for the vertical composition) forms a 2-category $\Int\M$. 
In case $\M = \mod_k^{op}$ 0-cells of $\Int{\mod_k^{op}}$ are corings, 1-cells are coring morphisms and the opposite of 2-cells are representations of corings; see \cite[Section~24]{BrzWis:cor}.

 \subsection{Internal cofunctors and the 2-category $\Intc \M$}
 An {\em internal cofunctor} or simply a {\em cofunctor} $\f : (\cC,\cA)\to (\cD,\cB)$ is a pair $\f = (f_0,f_1)$, where $f_0: \cD \to \cC$ is a morphism of comonoids, and $f_1: A\coten C {}^{f_0}\!D \to  {}^{f_0}\! B$ is a $\cC$-$\cD$-bicomodule map that respects multiplications and units in the sense that the following diagrams commute:
 $$
 \xymatrix{
A\square_{C}A\coten C {}^{\f}\!D \ar[r]^-{A\coten C f_1}\ar[d]_{m_{A}\coten C D}  & A \square_{C} {}^{\f}\! B\ar[r]^-{\simeq} &A \square_{C} {}^{\f}\! D\coten D B  \ar[d]^{f_1\coten D B}\\
A \coten C {}^{\f}\! D \ar[r]_-{f_{1}}& B & B\coten D B\ar[l]^{m_B} ,
}  \qquad 
\xymatrix{
A\coten C {}^{\f}\! D\ar[r]^-{f_{1}} & B   \\
 & D\ar[lu]^{u_{A}\coten C  D} \ar[u]_{u_{B}} ,
} 
$$
where we write ${}^\f \!B$ for ${}^{f_0}\!B$ etc., as in Section~\ref{sec.cat(m)}. The composite $\h\ubu \f$ of two cofunctors $\f : (\cC,\cA)\to (\cD,\cB)$, $\h : (\cD,\cB)\to (\cD',\cB')$ has the object component $(\h\ubu \f)_0 = f_0\circ h_0$, and its morphism component is the composite (in $\M$):
$$
\xymatrix{ A\coten C {}^{f_0\circ h_0}\!D' \ar[r]^-{\simeq} & A\coten C {}^\f\! D\coten D {}^\h\!D' \ar[rr]^-{f_1\coten D D'} && B\coten D {}^\h\!D' \ar[r]^-{h_1} & B' .}
$$

Let $\f,\g: (\cC,\cA)\to (\cD,\cB)$ be cofunctors. A {\em natural cotransformation} $\alpha: \f \Rightarrow\g
$ is a morphism of $\cD$-bicomodules (i.e.\ a 2-cell in $\Bicom \M$)
$\alpha: {}^{\f}D\to {}^{\g}\!B$ such that
 the following diagram commutes
$$
\xymatrix{& A\square_{C} {}^\g\! B\ar[r]^-{A\coten C \lambda}      & A\coten {C}{}^{\g}\!D\coten D B \ar[r]^-{g_{1}\coten{C} B}& B\square_{D}B\ar[rd]^{m_{B}}  &      \\
A\coten C ^{\f}\!D \ar[ru]^{A\coten C \alpha}\ar[rd]_{f_1}&& & & B \\
& B\ar[r]_-\lambda   & D\square_{D}B \ar[r]_-{\alpha\square_{D}B}  & B\square_{D}B\ar[ru]_{m_{B}} . &    
}
$$
The vertical composition of natural cotransformations $\alpha$, $\beta$ between cofunctors from $(\cC,\cA)$ to $(\cD,\cB)$ is the composite
$$
\beta \ust \alpha : \xymatrix{ D \ar[r]^-\alpha & B\ar[r]^-\lambda & D\coten D B\ar[r]^-{\beta\coten D B} & B\coten D B \ar[r]^-{m_B} & B}.
$$
The identity natural cotransformation for a cofunctor  from $(\cC,\cA)$ to $(\cD,\cB)$ is equal to $u_B$. In the situation:
$$
 \xy  *+{(\cC,\cA)}="A",
+<3cm,0pt>*+{(\cD,\cB)}="B", 
+<3cm,0pt>*+{(\cD',\cB')}="C",
"A";"B",{\twocell^\f_\g{\alpha}},
"B";"C",{\twocell^\h_\k{\beta}} ,
\endxy 
$$
the horizontal composition $\beta\ubu\alpha : \h\ubu\f \Rightarrow \k\ubu \g$ is the following composite:
$$
\xymatrix{ {}^{\h\ubu\f}\! D' \! \ar[r]^-\simeq & \!{}^\f\! D\coten D{}^\h\! D' \ar[r]^-{\alpha\coten D \beta} & {}^\g\! B\coten D\! {}^\k\! B' \! \ar[r]^-\simeq & \! {}^\g\! B\coten D{}^\k\! D'\coten{D'} B' \ar[r]^-{k_1\coten {D'} B'} & {}^{\k\ubu\g}\! B'\coten {D'} B'\! \ar[r]^-{m_B} & \! {}^{\k\ubu \g}\! B'.}
$$

As explained in \cite[Section~4.3]{Agu:int}, while an internal functor can be understood as a push-forward of arrows from one category to the other, an internal cofunctor can be interpreted as a lifting of arrows. In the case of $\M = \mod_k^{op}$, a cofunctor $\f$ from an $A$-coring $C$ to a $B$-coring $D$ is equivalent to the following commutative diagram of functors:
$$
\xymatrix{ D\! - \! \comod \ar[rrrr]^{F_1}\ar[dr] &&&& C\! - \! \comod \ar[dl] \\
& B\! - \! \mod \ar[rr]^{F_0}\ar[dr] && A\! - \! \mod \ar[dl]& \\
&& \mod_k .&&}
$$
Here $C\! - \! \comod$, $A\! - \! \mod$ etc.\ denote categories of left comodules respectively modules. The unmarked arrows are the forgetful functors and $F_0$ is the restriction of scalars functor corresponding to the $k$-algebra map $f_0: A\to B$. Thus a cofunctor in this case can be interpreted as a left extension of corings \cite{Brz:not}. 

 \section{Internal categories and the formal theory of monads}\label{sec.ftm}
 Following \cite{KelStr:rev}, a {\em Kleisli object}  of a monad $\t$ in a 2-category is an object representing the covariant $\t$-algebra functor  to the 2-category of categories  (the reader not familiar with this concept should consult Section~5 of \cite{MacSob:asp}). A formal approach to the theory of monads was subsequently developed in \cite{Str:for} and \cite{LacStr:for}. Although not every monad in a given bicategory has a Kleisli object, to any bicategory, say $\B$,  a new bicategory $\KL \B$ can be associated. Any monad in $\KL \B$ has a Klesli object (and it is given in a form of a {\em wreath product}). The detailed description of $\KL\B$ is given in \cite[Section~1]{LacStr:for}; here we give the relevant formulae in the case $\B = \Bicom \M$.
 
 The 0-cells of $\KL{ \Bicom \M}$ are monads in $ \Bicom \M$, i.e.\ monoids in the categories of $\cC$-bicomodules, i.e.\ internal categories $(\cC,\cA)$ in $\M$. A 1-cell $(\cC,\cA) \to (\cD,\cB)$ in $\KL{ \Bicom \M}$ is a pair $(\cM,\phi)$, where $\cM = (M,\lambda_M,\varrho_M)$ is a $\cC$-$\cD$-bicomodule and 
 $$
 \phi: A\coten C M \to M\coten D B
 $$
 is a $\cC$-$\cD$-bicomodule map rendering commutative the following diagrams
 $$
 \xymatrix{
A\square_{C}A\coten C M \ar[r]^-{A\coten C \phi}\ar[d]_{m_{A}\coten C M}  & A \square_{C} M \coten D B  \ar[r]^-{\phi\coten D B} & M\coten D B\coten D B\ar[d]^{M\coten D m_B} \\
A \coten C M \ar[rr]_-{\phi}  && M\coten D B ,
}  \qquad 
\xymatrix{
A\coten C M\ar[r]^-{\phi} & M\coten D B   \\
 & M\ar[lu]^{u_{A}\coten C  M} \ar[u]_{M\coten D u_{B}} .
} 
$$
A 2-cell
$$
\xy *+{(\cC,\cA)}="A",
+<3cm,0pt>*+{(\cD,\cB)}="B", 
"A";"B",{\twocell^{(\cM,\phi)}_{(\cN,\psi)}{\chi}},
\endxy 
$$
 is a $\cC$-$\cD$-bicomodule map $\chi: M\to N\coten D B$ rendering commutative the following diagram
 $$
 \xymatrix{
A\square_{C}M \ar[r]^-{\phi}\ar[d]_{A\coten C \chi}  & M \square_{D} B\ar[r]^-{\chi\coten D B} &N\coten D B\coten D B   \ar[d]^{N\coten D m_B}\\
A \coten C N\coten D B \ar[r]_-{\psi\coten D B}& N\coten D B \coten D B\ar[r]_-{N\coten D m_B}& N\coten D B .
} 
$$
 The vertical composition of 2-cells $\chi: (\cM,\phi) \Rightarrow (\cN,\psi)$, $\chi':(\cN,\psi) \Rightarrow (\cQ,\theta)$ is the composite morphism
 $$
 \xymatrix{ M \ar[r]^-\chi & N\coten D B \ar[r]^-{\chi'\coten D B} & Q\coten D B \coten D B \ar[r]^-{Q\coten D m_B} & Q\coten D B.}
 $$
The horizontal composition of 
 $$
\xy *+{(\cC,\cA)}="A",
+<3cm,0pt>*+{(\cD,\cB)}="B", 
+<3cm,0pt>*+{(\cD',\cB')}="C",
"A";"B",{\twocell^{(\cM,\phi)}_{(\cN,\psi)}{\chi}},
"B";"C",{\twocell^{(\cM',\phi')}_{(\cN',\psi')}{\chi'}} ,
\endxy 
$$
 is the composite
 $$
 \xymatrix{ M\coten D M'\ar[r]^-{\chi\coten D M'} & N\coten DB\coten DM' \ar[r]^-{N\coten D \phi'} & N\coten D M'\coten {D'} B'  && \\
&  \ar[r]^-{N\coten D \chi'\coten {D'} B'} & N\coten D N'\coten {D'} B'\coten{D'} B'\ar[rr]^-{N\coten D N'\coten {D'}m_{B'}} && N\coten D N'\coten {D'} B'.}
 $$
 As all $\Int\M$, $\Intc\M$ and $\KL{ \Bicom \M}$ have the same 0-cells it is natural to ask, what is the relationship of the former two 2-categories to the latter bicategory. This question is addressed presently. For any bicategory $\B$, $\B^*$ denotes the bicategory obtained by reversing 1-cells in $\B$, while $\B_*$ is the bicategory obtained by reversing 2-cells in $\B$. 

 \begin{theorem}\label{thm.meta1}
 $\Int\M$ is locally fully embedded in $\KL{\Bicom \M}$ and $\Int\M_*$ is locally fully embedded in $\KL{\Bicom \M^*}$.
 \end{theorem}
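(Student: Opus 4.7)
The plan is to define an explicit 2-functor $E\colon \Int\M \to \KL{\Bicom\M}$ which is the identity on 0-cells, and to verify that it is fully faithful on every hom-category; the second assertion will follow from an analogous, dualised, construction $E_*\colon \Int\M_* \to \KL{\Bicom\M^*}$.

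On 1-cells, I send $\f = (f_0, f_1)\colon (\cC,\cA) \to (\cD,\cB)$ to $E(\f) = (\cC^{f_0}, \phi_\f)$, where $\cC^{f_0}$ is $C$ viewed as a $\cC$-$\cD$-bicomodule with left coaction $\Delta_C$ and right coaction $(C\ot f_0)\circ \Delta_C$. Under the canonical isomorphism $A\coten C C^{f_0}\cong A^{f_0}$, obtained by contracting the middle $C$ with $\eps_C$, the distributive law is
\[
\phi_\f\colon A^{f_0}\longrightarrow C^{f_0}\coten D B, \qquad a\longmapsto a\su{-1}\ot f_1(a\su 0),
\]
with $\lambda_A(a) = a\su{-1}\ot a\su 0$. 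Unitality of $(\cC^{f_0}, \phi_\f)$ reduces to the functorial identity $u_B\circ f_0 = f_1\circ u_A$ combined with the left counit property of $\lambda_A$; associativity follows from the bicomodule compatibility of $\lambda_A$ with $m_A$ together with multiplicativity of $f_1$.

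On 2-cells, a natural transformation $\alpha\colon \f\Rightarrow\g$, with underlying bicomodule map $\alpha\colon {}^\g C^\f\to B$, is sent to
\[
\chi_\alpha\colon \cC^{f_0}\longrightarrow \cC^{g_0}\coten D B, \qquad c\longmapsto c\sw 1\ot \alpha(c\sw 2),
\]
where $\Delta_C(c) = c\sw 1\ot c\sw 2$. The $\cD$-bicolinearity of $\alpha$ and coassociativity of $\Delta_C$ ensure that $\chi_\alpha$ lands in the cotensor product and is a $\cC$-$\cD$-bicomodule map. The Kleisli 2-cell axiom for $\chi_\alpha$ unwinds, via coassociativity and the bicomodule compatibility between $\lambda_A$ and $\varrho_A$, precisely into the defining diagram of the natural transformation $\alpha$. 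Moreover the identity natural transformation $f = f_1\circ u_A = u_B\circ f_0$ maps to $\chi_f(c) = c\sw 1\ot u_B(f_0(c\sw 2))$, which coincides with $\cC^{f_0}\coten D u_B$, the identity 2-cell on $E(\f)$; preservation of vertical and horizontal compositions is then a routine Sweedler diagram chase.

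For local full faithfulness, given any $\KL{\Bicom\M}$ 2-cell $\chi\colon (\cC^{f_0},\phi_\f)\Rightarrow (\cC^{g_0},\phi_\g)$, set $\bar\alpha := (\eps_C\ot B)\circ \chi\colon C\to B$. Applying $C\ot \eps_C\ot B$ to the left $\cC$-colinearity identity $(\Delta_C\ot B)\circ \chi = (C\ot \chi)\circ \Delta_C$ and using the counit yields the reconstruction $\chi(c) = c\sw 1\ot \bar\alpha(c\sw 2) = \chi_{\bar\alpha}(c)$; combining the cotensor-product defining condition for $\chi(c)\in \cC^{g_0}\coten D B$ with the same manipulation gives the $\cD$-bicolinearity of $\bar\alpha$; and applying $\eps_C\ot B$ to the Kleisli 2-cell axiom for $\chi$ produces the internal natural-transformation equation for $\bar\alpha$. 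Hence $\alpha\mapsto \chi_\alpha$ is a bijection onto $\KL{\Bicom\M}(E(\f), E(\g))$. For the dual $E_*$, I use $\cM'_\f := {}^{f_0}C$ as a $\cD$-$\cC$-bicomodule with distributive law $\phi'_\f(a) = f_1(a\sw 0)\ot a\sw 1$ (where $\varrho_A(a) = a\sw 0\ot a\sw 1$), and send a 2-cell $\alpha\colon \f\Rightarrow \g$ of $\Int\M_*$ (i.e., $\alpha\colon {}^\f C^\g\to B$ in $\Int\M$) to $\chi'_\alpha(c) = \alpha(c\sw 1)\ot c\sw 2$; the verifications are mirror images of the above, with the roles of $\lambda_A$ and $\varrho_A$ interchanged. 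The main technical hurdle throughout is the verification that the Kleisli 2-cell axioms for $\chi_\alpha$ and $\chi'_\alpha$ translate precisely into the defining equations of internal natural transformations, which requires careful manipulation of iterated coactions of $\cA$ together with the bicomodule compatibility of $\lambda_A$ and $\varrho_A$.
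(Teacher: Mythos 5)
Your proposal is correct and follows essentially the same route as the paper: the same 2-functor (identity on 0-cells, $\f\mapsto(\cC^{f_0},(C\coten C f_1)\circ\lambda_A)$ on 1-cells, $\alpha\mapsto(C^{\g}\coten D\alpha)\circ\Delta_C$ on 2-cells), the same dual construction via $\varrho_A$, and the same deferral of the functoriality axioms to a diagram chase. The only cosmetic difference is that you recover $\alpha$ from a Kleisli 2-cell $\chi$ as $(\eps_C\ot B)\circ\chi$ whereas the paper uses $m_B\circ(g\coten D B)\circ\chi$; these coincide by the cotensor condition and unitality of $m_B$.
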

 \begin{proof}
 The embedding  functor 
 $$
 \Phi: \Int\M \to \KL{\Bicom \M},
 $$
 is defined as follows. On 0-cells $\Phi$ is the identity. On 1-cells
 $$
 \Phi: \left(\xymatrix{(\cC,\cA)\ar[r]^\f & (\cD,\cB)}\right)\longmapsto \left(\cC^\f, \phi_\f: A\coten C C^\f \cong A^\f \to C^\f\coten DB\right),
 $$
 where $\phi_\f = (C\coten C f_1)\circ \lambda_A$. On 2-cells
 $$
  \Phi: \left( \quad \xy  *+{(\cC,\cA)}="A",
+<3cm,0pt>*+{(\cD,\cB)}="B", 
"A";"B",{\twocell^\f_\g{\alpha}}
\endxy \quad  \right)
\longmapsto \left( \quad  \xy *+{(\cC,\cA)}="A",
+<3cm,0pt>*+{(\cD,\cB)}="B", 
"A";"B",{\twocell^{(\cC^\f,\phi_\f)}_{(\cC^\g,\phi_\g)}{~~\chi_\alpha}}
\endxy \quad  \right) ,
$$
where $\chi_\alpha = (C^\g\coten D \alpha)\circ \Delta_C$. The proof that $\Phi$ is a functor is a standard exercise in the diagram chasing. Since $\Phi$ is the identity on 0-cells it is an embedding. To see that it is locally full, take internal functors $\f, \g : (\cC,\cA) \to (\cD,\cB)$ and any 2-cell
$$
\xy *+{(\cC,\cA)}="A",
+<3cm,0pt>*+{(\cD,\cB)}="B", 
"A";"B",{\twocell^{(\cC^\f,\phi_\f)}_{(\cC^\g,\phi_\g)}{\chi}},
\endxy
$$
in $\KL{\Bicom \M}$. Then $\alpha = m_B\circ (g_{0}\coten D B)\circ \chi$ is an internal natural transformation $\alpha: \f \Rightarrow \g$. Furthermore, $\Phi(\alpha) = \chi$. Therefore, $\Phi$ is a locally full embedding as stated. 

The embedding 
$
 \hPhi: \Int\M_* \to \KL{\Bicom \M^*},
 $
 is obtained from $\Phi$ by the left-right symmetry. That is, on 0-cells $\hPhi$ is the identity. On 1-cells
 $$
 \hPhi: \left(\xymatrix{(\cC,\cA)\ar[r]^\f & (\cD,\cB)}\right)\longmapsto \left({}^\f\!\cC, \hphi_\f: {}^\f\! C \coten C A \cong {}^\f\!A \to B\coten D{}^\f\!C\right),
 $$
 where $\hphi_\f = (f_1\coten C C)\circ \varrho_A$. On 2-cells (in $\Int\M$)
 $$
  \hPhi: \left( \quad \xy  *+{(\cC,\cA)}="A",
+<3cm,0pt>*+{(\cD,\cB)}="B", 
"A";"B",{\twocell^\g_\f{\alpha}}
\endxy \quad  \right)
\longmapsto \left( \quad  \xy *+{(\cC,\cA)}="A",
+<3cm,0pt>*+{(\cD,\cB)}="B", 
"A";"B",{\twocell^{({}^\f\!\cC,\hphi_\f)}_{({}^\g \cC,\hphi_\g)}{~~\hchi_\alpha}}
\endxy \quad  \right) ,
$$
where $\hchi_\alpha = (\alpha\coten D {}^\g C)\circ \Delta_C$. 
 \end{proof}
 
\begin{theorem}\label{thm.meta2}
 $\Intc\M$ is locally fully embedded in $\KL{\Bicom \M}$.
 \end{theorem}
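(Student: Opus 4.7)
The plan is to mimic the strategy used in the proof of Theorem~\ref{thm.meta1}, the key idea being to exploit the canonical isomorphism ${}^\f\! B \cong {}^\f\!\cD \coten D \cB$ induced by the left $\cD$-coaction on $B$. Define a 2-functor
$$
\Psi : \Intc\M \to \KL{\Bicom \M}
$$
as the identity on 0-cells, on a cofunctor $\f : (\cC,\cA)\to (\cD,\cB)$ by
$$
\Psi(\f) = \bigl({}^\f\!\cD,\ \phi_\f\bigr), \qquad \phi_\f : A\coten C {}^\f\! D \xrightarrow{\ f_1\ } {}^\f\! B \xrightarrow{\ \simeq\ } {}^\f\! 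D\coten D B ,
$$
and on a natural cotransformation $\alpha: {}^\f\! D\to {}^\g\! B$ by
$$
\chi_\alpha : {}^\f\! D \xrightarrow{\ \alpha\ } {}^\g\! B \xrightarrow{\ \simeq\ } {}^\g\! D\coten D B,
$$
where in both cases the unnamed isomorphism is the left $\cD$-coaction of $B$ viewed with the induced left $\cC$-coaction through $f_0$, respectively $g_0$.

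The first step is to verify that $({}^\f\!\cD,\phi_\f)$ is genuinely a 1-cell of $\KL{\Bicom \M}$, i.e., that the two compatibility pentagons of the Kleisli bicategory are satisfied. Both are diagram chases: the associativity pentagon reduces to the multiplicativity square of $\f$ (after transporting $m_B$ across the iso ${}^\f\! B \cong {}^\f\! D\coten D B$ via coassociativity of $\lambda_B$), while the unit triangle is exactly the unitality triangle of $\f$ under the same iso. A parallel chase shows that $\chi_\alpha$ satisfies the 2-cell compatibility square in $\KL{\Bicom\M}$; here the cotransformation axiom for $\alpha$ is used essentially, with $\lambda_B$ allowing one to rewrite the $g_1\coten C B$ branch and the $\lambda\circ\alpha$ branch of the pentagon defining natural cotransformations. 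Functoriality (preservation of identities, of vertical composition $\ust$, and of the Godement-type horizontal composition $\ubu$) is again a long but routine diagram chase, hinging on naturality of $\lambda_B$ and on the explicit composition formulas for 1- and 2-cells of $\KL{\Bicom\M}$ recalled just above the theorem.

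Since $\Psi$ is the identity on 0-cells it is automatically an embedding, so the remaining task is local fullness. For parallel cofunctors $\f,\g : (\cC,\cA)\to (\cD,\cB)$ and an arbitrary 2-cell $\chi : ({}^\f\!\cD,\phi_\f)\Rightarrow ({}^\g\!\cD,\phi_\g)$ in $\KL{\Bicom \M}$, define
$$
\alpha_\chi : {}^\f\! D \xrightarrow{\ \chi\ } {}^\g\! D\coten D B \xrightarrow{\ e_D\ot B\ } {}^\g\! B,
$$
the second arrow being the inverse of the $\lambda_B$ isomorphism. Bicomodule-linearity of $\chi$ gives $\cD$-bicolinearity of $\alpha_\chi$, and the Kleisli compatibility square on $\chi$ translates (under the same $\lambda_B$-iso used above) precisely into the hexagon defining a natural cotransformation. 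A direct check shows that $\Psi(\alpha_\chi)=\chi$ and $\alpha_{\Psi(\beta)} =\beta$ for any cotransformation $\beta$, so $\Psi$ is locally full.

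The step I expect to be the most delicate is the verification that $\Psi$ preserves horizontal composition of 1- and 2-cells: the composition formula in $\KL{\Bicom \M}$ (which iterates $\phi$ and then multiplies in $B'$) must be matched against the composition $\h\ubu\f$ of cofunctors, and after transport through the iso ${}^{\h\ubu\f}\! D'\cong {}^\f\! D\coten D {}^\h\! D'$ the required equality comes down to a single application of the associativity of the cotensor product together with the multiplicativity axioms for $\f$ and $\h$. Beyond that, all remaining work is confined to routine bookkeeping of coactions and counits.
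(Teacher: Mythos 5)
Your proposal is correct and follows essentially the same route as the paper: the paper's $\Psi$ is exactly your assignment $\f\mapsto({}^\f\cD,f_1)$ and $\alpha\mapsto\alpha$, read through the standard identification ${}^\f B\cong{}^\f D\coten D B$, which is precisely the $\lambda_B$-isomorphism you use throughout. The only difference is that you spell out the verifications (1-cell axioms, functoriality, local fullness via composing with the counit) which the paper declares immediate from that identification.
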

 \begin{proof}
 This is even more straightforward than Theorem~\ref{thm.meta1}. A functor 
 $$
 \Psi: \Intc\M \to \KL{\Bicom \M},
 $$
 is defined as follows. On 0-cells  $\Psi$ is the identity.  On 1-cells
 $$
 \Psi: \left(\xymatrix{(\cC,\cA)\ar[r]^\f & (\cD,\cB)}\right)\longmapsto \left({}^\f \cD , f_1\right),
 $$
 On  2-cells $\Psi$ is the identity, where, by  the standard identification ${}^\g B \cong {}^\g D\coten D B$,  $\alpha: {}^\f D\to {}^\g B$ is now understood as a morphism ${}^\f D\to {}^\g D\coten D B$. The aforesaid identification allows one to see immediately that $\Psi$ is well-defined and compatible with all compositions.
 \end{proof}

\section{Adjunctions and monads on noncartesian internal categories}\label{sec.Kle}

\subsection{Adjunctions and (co)Kleisli objects in $\Int \M$}\label{sec.Kleisli}

Let $(\cC,\cA)$, $(\cD,\cB)$ be internal categories in $\M = (\M,\ot, \1)$ and consider a pair of internal functors $\l : (\cC,\cA)\to  (\cD,\cB)$ and $\r : (\cD,\cB)\to (\cC,\cA)$.  The functor $\l$ is said to be {\em left adjoint} to $\r$, provided  $\l \dashv \r$ is  an adjunction in the 2-category $\Int\M$. That is  there are internal natural transformations $\eps: \l\r \Rightarrow \id$, $\eta: \id\Rightarrow \r\l$ such that
\begin{equation}\label{t.eq}
\eps l_0 * l_1\eta = l , \qquad 
r_1\eps *\eta r_0 = r. 
\end{equation}
The conditions \eqref{t.eq} are referred to as {\em triangular identities}, $\eps$ is called a {\em counit} and $\eta$ is called a {\em unit} of the adjunction $\l \dashv \r$. The aim of this section is to show that, similarly to the standard ``external"{} category theory, the adjointness can be characterised by  an isomorphism of morphism objects. 

Start with a pair of internal functors $\l : (\cC,\cA)\to  (\cD,\cB)$ and $\r : (\cD,\cB)\to (\cC,\cA)$. The object $D^\r \coten  C A$ can be interpreted as an object of all morphisms in $(\cC,\cA)$ with codomain $\r$, i.e.\ of morphisms $- \to \r (-)$. Thus, intuitively, $D^\r \coten  C A$  is an internal version of the bifunctor $\Mor_\cA ( - , \r -)$. Similarly $B\coten  D {}^\l C$ can be interpreted as a collection of arrows $\l(-) \to -$, i.e.\ as an internalisation of $\Mor_\cB ( \l- ,  -)$. With this interpretation in mind we propose the following
\begin{definition}\label{def.binat}
Let $\l : (\cC,\cA)\to  (\cD,\cB)$ and $\r : (\cD,\cB)\to (\cC,\cA)$ be a pair of internal functors.  A $\cD$-$\cC$-bicomodule map
$$
\theta: D^\r \coten  C A \to B\coten  D {}^\l C,
$$
is said to be {\em bi-natural} provided it renders commutative the following diagrams:
$$
\xymatrix{B^\r \coten C  A\!  \ar[rr]^{\varrho \coten C A} \ar[d]_{\lambda \coten C A} && \! B\coten D  D^\r \coten CA\!  \ar[rr]^{B\coten D\theta} && \! B\coten D B\coten D {}^\l C \ar[d]^{{}\! m_B\coten D \! {}^\l C} \\
{}\!  D\coten D B^\r \coten C A \ar[rd]_{D\coten D r_1\coten C A} &&&& B\coten D {}^\l C \\
& D^\r \coten C A\coten C A \ar[rr]^{D^\r \coten C m_A} && D^\r \coten C A \ar[ru]_\theta & 
}
$$
and 
$$
\xymatrix{D^\r \coten C A \coten C A \ar[r]^-{\theta \coten C \varrho} \ar[dr]_{D^\r \coten C m_A} & B\coten D  {}^\l C\coten CA\coten C C  \ar[rr]^-{B\coten Dl\coten Cl_1\coten D {}^\l C} && B\coten DB\coten D B \coten D {}^\l C \ar[dl]^{m_B^2 \coten D {}^\l C} \\
& D^\r\coten C A \ar[r]_{\theta} & B\coten D {}^\l C. &}
$$
\end{definition}

The readers can easily convince themselves that the first of the above diagrams corresponds  to the statement that  $\theta$ is a natural transformation between covariant morphism functors. Since the category of $\cC$-bicomodules is not symmetric (nor even braided), the definition of an oppostive internal category hence contravariant internal functor does not seem to be  possible. On the other hand a condition reflecting naturality of $\theta$ as a transformation between contravatiant hom-functors can be stated provided all the morphisms are evaluated (or composed) at the end. This is the essence of the second diagram.

\begin{theorem}\label{thm.adjoint}
Let $\l : (\cC,\cA)\to  (\cD,\cB)$ and $\r : (\cD,\cB)\to (\cC,\cA)$ be a pair of internal functors. Then  $\l \dashv \r$ is an adjunction if and only if the objects $D^\r \coten  C A$ and $B\coten  D {}^\l C$ are isomorphic by a bi-natural map of $\cD$-$\cC$-bicomodules.
\end{theorem}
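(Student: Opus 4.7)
The statement is the internal analogue of the classical hom-set characterisation of adjunctions, so the plan is to mimic the unit/counit correspondence using morphisms in $\M$, with careful attention to cotensor equalisers and induced bicomodule structures.

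For the $(\Rightarrow)$ direction, given unit $\eta: C\to A$ and counit $\eps: D\to B$, I would define $\theta: D^\r\coten C A \to B\coten D{}^\l C$ as the internal rendering of the classical assignment $(d,a)\mapsto (\eps_d \circ l_1(a),\,\mathrm{dom}(a))$. Concretely, use the right $\cC$-coaction on $A$ to separate off a copy of $C$ (the ``domain'' of $a$), apply $\eps$ to the $D$-factor and $l_1$ to the separated $A$-factor, and multiply the two resulting elements of $B$ via $m_B$. Dually, define $\tilde\theta: B\coten D{}^\l C \to D^\r\coten C A$ rendering $(b,c)\mapsto(\mathrm{cod}(b),\, r_1(b)\circ \eta_c)$, using $\lambda_B$, $r_1$, $\eta$ and $m_A$. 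I then verify: (a) both maps are $\cD$-$\cC$-bicomodule maps, a routine diagram chase from the coaction-compatibility of the ingredients; (b) $\theta$ satisfies the two diagrams of Definition~\ref{def.binat}, the first following from naturality of $\eps$ together with associativity of $m_B$, the second from the multiplicativity of $l_1$; (c) $\theta$ and $\tilde\theta$ are mutually inverse, with each of the triangular identities $\eps l_0 * l_1\eta = l$ and $r_1\eps * \eta r_0 = r$ used exactly once to collapse a composite of the form ``$\eps\circ l_1\circ\eta$'' (respectively ``$r_1\circ\eps\circ\eta$'') into an identity natural transformation.

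For the $(\Leftarrow)$ direction, starting from a bi-natural bicomodule isomorphism $\theta$, I would extract the candidate counit and unit as
$$\eps := (B\otimes e_C)\circ \theta\circ \iota_D,\qquad \eta := (e_D\otimes A)\circ\theta^{-1}\circ \iota_C,$$
where $\iota_D: D\to D^\r\coten C A$ is the ``identity section'' built from $\Delta_D$, $r_0$ and $u_A$ (factoring through the equaliser by $\lambda_A\circ u_A = (C\otimes u_A)\circ\Delta_C$ combined with the comonoid property of $r_0$), and dually $\iota_C: C\to B\coten D{}^\l C$ is built from $\Delta_C$, $l_0$ and $u_B$. Next, one verifies that $\eps$ and $\eta$ are internal natural transformations by specialising the two bi-naturality diagrams of $\theta$ along $\iota_D, \iota_C$, and that the two triangular identities follow directly from $\theta\circ \theta^{-1} = \id = \theta^{-1}\circ \theta$ evaluated on the same identity sections.

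The principal obstacle is step (c) of the forward direction: disentangling the nested cotensor products to identify where the triangular identities apply and then using them to collapse a composite involving $\eps, \eta, l_1, r_1$ and the multiplications $m_A, m_B$. The remaining verifications are formal diagram chases in $\Bicom\M$, exploiting naturality, associativity and the universal property of the cotensor.
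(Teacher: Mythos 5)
Your construction coincides with the paper's computational proof: the maps you describe are exactly $\theta=(m_B\coten D {}^\l C)\circ(\eps\coten D l_1\coten C C)\circ(D^\r\coten C\varrho)$ and $\theta^{-1}=(D^\r\coten C m_A)\circ(D\coten D r_1\coten C\eta)\circ(\lambda\coten D{}^\l C)$, and your $\eps$, $\eta$ in the converse, built from the identity sections $(D\coten D r)\circ\Delta_D$ and $(l\coten C C)\circ\Delta_C$, agree with the paper's, since $m_B\circ(B\coten D l)$ reduces to $B\ot e_C$ on $B\coten D{}^\l C$ by counitality. Your calibration of the forward direction is also right: each triangular identity is used exactly once, together with naturality of $\eta$ (resp.\ $\eps$) and multiplicativity of $r_1$ (resp.\ $l_1$), to show $\theta$ and $\theta^{-1}$ are mutually inverse.

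The gap is in the converse. You assert that the triangular identities ``follow directly from $\theta\circ\theta^{-1}=\id=\theta^{-1}\circ\theta$ evaluated on the identity sections.'' They do not: invertibility of a bicomodule map alone cannot yield the triangular identities, just as in the classical setting the identity $\eps_{Lc}\circ L\eta_c=\id$ needs the \emph{naturality} of the hom-set bijection and not merely its bijectivity. Here the analogue of that naturality is the bi-naturality of Definition~\ref{def.binat}, and it enters the triangular identities essentially and repeatedly: to reduce $\eps l_0 * l_1\eta$ to $l$ one must apply the second bi-naturality diagram to absorb a product into $\theta$, then the first diagram, then the second diagram again for $\theta^{-1}$, combine this with the colinearity of $\theta$ and $\theta^{-1}$ and the equaliser relations defining their codomains, and only at the very last step invoke $\theta\circ\theta^{-1}=\id$; in total half of the twelve identities characterising the pair $(\theta,\theta^{-1})$ are consumed. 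This is the longest computation in the argument, so your proposal also misplaces the difficulty: you flag step (c) of the forward direction as the principal obstacle, whereas that step is comparatively routine and the real work sits exactly where you declare the conclusion ``direct.'' If you wish to avoid this generalised-element calculation altogether, note that the paper also gives a second, formal proof by transporting the problem along the locally full embedding $\Phi:\Int\M\to\KL{\Bicom\M}$ of Theorem~\ref{thm.meta1} and using the known description of adjunctions there.
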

\begin{proof} We give two proofs of this theorem. The first one is based on straightforward albeit lengthy calculations that use generalised elements. The second one, suggested to us by G.\ B\"ohm, is more conceptual and based on the interplay between $\Int\M$ and $\KL{\Bicom \M}$ (see Theorem~\ref{thm.meta1}) and description of adjunctions in the latter. \\ ~

\underline{Computational proof.} Suppose that $\l \dashv \r$ is an adjunction with counit $\eps$ and unit $\eta$. Then
the isomorphism is
$$
\theta: D^\r \coten  C A \to B\coten  D {}^\l C, \qquad \theta = (m_B\coten D {}^\l C)\circ (\eps \coten D l_1\coten C C)\circ (D^\r \coten C \varrho),
$$
with the inverse
$$
\theta^{-1} : B\coten  D {}^\l C\to D^\r \coten  C A, \qquad \theta^{-1} = (D^\r \coten C m_A)\circ (D\coten Dr_1 \coten C \eta)\circ(\lambda \coten D {}^\l C).
$$
These maps are well-defined by the colinearity of $\eps$, $\eta$ and the multiplication maps $m_A$ and $m_B$. To prove that $\theta^{-1}\circ\theta$ is an identity apart for the colinearity one uses the naturality of $\eta$, multiplicativity of $r_1$ and the second triangular identity \eqref{t.eq}. Dually, the proof that  $\theta\circ\theta^{-1}$ is an identity uses the naturality of $\eps$, multiplicativity of $l_1$ and the first triangular identity in \eqref{t.eq}.  The naturality of $\eps$ combined with colinearity of $m_A$, multiplicativity of $l_1$ and unitality of $m_A$ yield the bi-naturality of $\theta$. Dually, the bi-naturality of $\theta^{-1}$ follows by the naturality of $\eta$, colinearity of $m_B$, multiplicativity of $r_1$ and the unitality of $m_B$. 

Conversely, assume that there is a  bi-natural map $\theta: D^\r \coten  C A \to B\coten  D {}^\l C$ with the inverse $
\theta^{-1} : B\coten  D {}^\l C\to D^\r \coten  C A$. Define the bicomodule maps
$$
\eps: D^{\l\r} \to B, \qquad \eta : {}^{\r\l}C \to A,
$$
as composites
$$
\eps = m_B \circ (B\coten D l)\circ \theta \circ (D\coten D r)\circ \Delta_D, \qquad
\eta = m_A \circ (r\coten C A)\circ\theta^{-1}\circ (l \coten C C)\circ \Delta_C.
$$
The bi-naturality of $\theta$ together with the unitality of $m_A$ imply that $\eps$ is a natural transformation $\eps: \l\r \Rightarrow \id$. Similarly, $\eta$ is a natural transformation $\eta: \id \Rightarrow \r\l$ by bi-naturality of $\theta^{-1}$ and unitality of $m_B$. This is confirmed by straigthforward calculations. 

To prove the first of triangular identities \eqref{t.eq} we introduce an explicit notation for the comultiplication and left coactions, i.e.\ the Sweedler notation, which we adopt in the form $\Delta_C^n(c) = c\sw 1\ot c\sw 2\ot \cdots \ot c\sw{n+1}$ and $\lambda^n (b) = b\sw {-n}\ot \cdots \ot b\sw {-1}\ot  b\sw{0}$. Here $c$ should be understood as a generalised element of $C$ (i.e.\ a morphism from any object in $\M$ to $C$), and $b$ is a generalised element of $B$. Next, take  a generalised element  $d \coten{} a$  of $D^\r \coten C A$ and  a generalised element $b\coten {} c$  of $B\coten D{}^\l C$, and write
$$
a_\theta \coten{} d^\theta := \theta (d \coten{} a), \qquad c_\ttheta \coten{} b^\ttheta := \theta^{-1} ( b\coten{} c),
$$
for the generalised elements obtained after applying $\theta$ and $\theta^{-1}$, respectively. The properties of $\theta$ and $\theta^{-1}$ give rise to twelve equalities. There are four equalities corresponding to the colinearity (two for each $\theta$ and $\theta^{-1}$), two equalities encoding the codomains of $\theta$ and $\theta^{-1}$ (e.g.\ $a_\theta \coten{} d^\theta$  is in an apropriate equaliser), two recording that $\theta$ and $\theta^{-1}$ are mutually inverse, finally the bi-naturality of  $\theta$  results in two  equations for $\theta$ and two equations for $\theta^{-1}$. The interested reader can easily write all these equalities down; we only show them in action. 

In the forthcoming calculation, the multiplications $m_A$, $m_B$ 
as well as composition in $\M$ 
are denoted by juxtposition. We also freely use the following equalities:
\begin{equation}\label{url}
u_A r_0 l_0 = r l_0 = rl = r_1 l.
\end{equation}
etc. Take a generalised element $c$ of $C$, and, using the definition of $\eps$ and $\eta$,  and the fact that $l_0$ is a map of comonoids, compute
$$
\eps l_0 * l_1\eta (c) = \eps l_0 * l_1\eta (c) = rl_0 (c\sw 2)_\theta l\left(l_0(c\sw 1)^\theta\right)l_1 r (c\sw 4_ \ttheta) l_1 \left( l(c\sw 3)^\ttheta\right).
$$
Next, use \eqref{url} and  apply the second of the diagrams in Definition~\ref{def.binat} to the first three terms, then the first of the diagrams in Definition~\ref{def.binat} to the first three terms and then the second of the diagrams in Definition~\ref{def.binat} to the last three terms  to obtain:
\begin{eqnarray*}
\eps l_0 * l_1\eta (c) &=& \left( r_1l(c\sw 2) r(c\sw 4_\ttheta)\right)_\theta l\left(l_0(c\sw 1)^\theta\right) l_1 \left( l(c\sw 3)^\ttheta\right)\\
&=& l (c\sw 1)\left(r(c\sw 4_\ttheta)\right)_{\theta}l\left(l_0(c\sw 2)^\theta\right)l_1 \left( l(c\sw 3)^\ttheta\right)\\
&=& l (c\sw 1) \left(r(c\sw 4_\ttheta) l(c\sw 3)^\ttheta\right)_\theta l\left( l_0(c\sw 2)^\theta\right) .
\end{eqnarray*}
The left colinearity of $\theta$, unitality of multiplication $m_B$ and the left colinearity of $\theta^{-1}$ allow one to transform the above expression further:
\begin{eqnarray*}
\eps l_0 * l_1\eta (c) &=& u_B\left(\left(r(c\sw 3_\ttheta) l(c\sw 2)^\ttheta\right)_\theta\sw{-1}\right) \left(r(c\sw 3_\ttheta) l(c\sw 2)^\ttheta\right)_\theta\sw 0  l\left( l_0(c\sw 1)^\theta\right)\\
&= &\left(r(c\sw 3_\ttheta) l(c\sw 2)^\ttheta\right)_\theta  l\left( l_0(c\sw 1)^\theta\right)
= \left(r(c\sw 2_\ttheta\sw 2) l(c\sw 1)^\ttheta\right)_\theta  l\left(c\sw 2_\ttheta \sw 1^\theta\right).
\end{eqnarray*}
Finally we can use the fact that the codomain of  $\theta^{-1}$ is equal to the equaliser $D^\r\coten C A$, then the unitality of $m_A$, the property $\theta\circ \theta^{-1} = \id$, and the unitality of $m_B$ (or the idempotent property $l*l=l$) to compute
\begin{eqnarray*}
\eps l_0 * l_1\eta (c) &=&  \left(u_A\left (l(c\sw 1)^\ttheta\sw {-1}\right) l(c\sw 1)^\ttheta\sw 0\right)_\theta  l\left(c\sw 2_\ttheta{}^\theta\right)\\
&=& l(c\sw 1)^\ttheta{}_\theta  l\left(c\sw 2_\ttheta{}^\theta\right) = l(c\sw 1)l(c\sw 2) = l(c),
\end{eqnarray*}
as required. In summary, we used precisely half of the twelve properties characterising $\theta$ and $\theta^{-1}$. The other half is used to verify the second triangular equality \eqref{t.eq} (a task left to the reader). \\~

\underline{Conceptual proof.} This proof has been suggested to us by G.\ B\"ohm \cite{Boh:pri}. Start with a 2-category (or a bicategory) $\B$. 
Denote vertical composition in $\B$ by $*$ and the horizontal composition by $\bullet$. Consider a pair of 1-cells in $\KL\B$
$$
\xymatrix{(A,C) \ar@<0.5ex>[rr]^{(L,\phi)} && (B,D) \ar@<0.5ex>[ll]^{(R,\psi)} },
$$
where the notation $(A,C)$ means a monad $A$ on 0-cell $C$ (in $\B$). Assume further that $L$ has a right adjoint $K$ in $\B$, with unit $\iota$ and counit $\sigma$. As explained in \cite{Boh:pri}  there is a bijective correspondence between adjunctions $(L,\phi) \dashv (R,\psi)$ in $\KL\B$ and isomorphism 2-cells 
$$
\theta : A\!\bullet\! R \Rightarrow K\!\bullet\! B,
$$ 
satisfying the following equalities:
$$
\theta * (m_A\!\bullet\! R) = (K\!\bullet\! m_B)*(K\!\bullet\! B\!\bullet\! \sigma\!\bullet\! B)*(K\!\bullet\!\phi\!\bullet\! K\!\bullet\! B)*(\iota\!\bullet\! A\!\bullet\! K\!\bullet\! B)*(A\!\bullet\! \theta)
$$
and
$$
(K\!\bullet\! m_B)*(\theta \!\bullet\! B) = \theta *(m_A\!\bullet\! R)*(A\!\bullet\! \psi).
$$
Now take $\B = \Bicom\M$. Since there is a locally full embedding $\Phi: \Int\M \to \KL{\Bicom\M}$ (see Theorem~\ref{thm.meta1}), description of adjunctions in $\Int\M $ is the same as description of adjunctions in $\Phi(\Int\M)\subset \KL{\Bicom\M}$. Properties of the pair of internal functors $\l,\r$ can be translated to properties of 1-cells $(C^\l, \phi_\l)$ and $(D^\r, \phi_\r)$, where $\phi_\l$, $\phi_\r$ are defined in the proof of Theorem~\ref{thm.meta1}. The bicomodule $C^\l$ has a right adjoint ${}^\l C$ in $\Bicom \M$. The unit of adjunction is $\Delta_C$, and the counit is $l_0$. In the case of $\B = \Bicom \M$, the above properties of 2-cells $\theta$  correspond precisely to the bi-naturality. 
\end{proof}

A monad on an internal category $(\cC,\cA)$ is defined as a monad in the 2-category $\Int\M$ with underlying 0-cell $(\cC,\cA)$. That is, a monad is a triple $\t = (\t ,\mu, \eta)$, where $\t$ is an internal endofunctor on $(\cC,\cA)$, and $\mu : \t\t \Rightarrow \t$, $\eta: \id \Rightarrow \t$ are natural transformations satisfying the standard associativity and unitality conditions:
$$
\mu * t_1\mu = \mu * \mu t_0, \qquad \mu *t_1\eta = \mu *\eta t_0 = t . 
$$
Any adjunction $\l \dashv \r$ from $(\cC,\cA)$ to  $(\cD,\cB)$ with counit $\eps$ and unit $\eta$ defines a monad on $(\cC,\cA)$ by $(\r\l, r_1\eps l_0 , \eta)$. The aim of this section is to construct explicitly the Kleisli object (or the Kleisli internal category) for a monad $\t$ in $\Int\M$.

We first recall the definition of  Kleisli objects (in $\Int \M$) from \cite{KelStr:rev}; see also   \cite[Section~5]{MacSob:asp}. Start with  a monad $\t = (\t,\mu,\eta)$ on an internal category $(\cC,\cA)$ in $\M$. Then the $\t$-algebra 2-functor 
$$
\Alg \t : \Int \M \to \mathbf{CAT},
$$
is defined as follows. For every internal category $(\cD,\cB)$, $\Alg\t (\cD,\cB)$ is a category  consisting of pairs $(\y,\sigma)$, where $\y: (\cC,\cA) \to (\cD,\cB)$ is an internal functor and $\sigma: \y\t \Rightarrow \y$ is an internal natural transformation such that
$$
\sigma * y_1\eta = y, 
\qquad \sigma *y_1\mu = \sigma *\sigma t_0.
$$
A morphism $(\y,\sigma) \to (\y'\sigma')$ in  $\Alg\t (\cD,\cB)$ is an internal natural transformation $\alpha: \y\to \y'$ such that
$$
\alpha *\sigma = \sigma'*\alpha t_0.
$$
Given an internal functor $\f : (\cD,\cB) \to (\cD',\cB')$, $\Alg \t (\f) = f^\t$, where $f^\t (\y,\sigma) = (\f\y, f_1\sigma)$ and $f^\t(\alpha) = f_1\alpha$. Finally, given an internal natural transformation $\beta : \f \Rightarrow \g$, $\Alg\t (\beta)$ is a natural transformation $\beta^\t : \f^\t \Rightarrow \g^\t$ given on objects $(y,\sigma)$ in terms of the internal natural transformation $\beta y_0: \f\y \Rightarrow \g\y$. A {\em Kleisli object} for $\t$ is then defined as the representing object for $\Alg\t$.

\begin{theorem}\label{thm.kleisli}
Let  $\t = (\t ,\mu, \eta)$ be a monad on an internal category $(\cC,\cA)$ in $\M$. Denote by   $\cA_\t$ a $\cC$-bicomodule with underlying object
$
A_\t = C^\t \coten C A
$
and coactions $\Delta_C\coten C A$, $C^\t \coten C \varrho$. 
Define bicomodule maps
$$
u_\t : C\to A_\t, \qquad u_\t = (C\coten C \eta)\circ\Delta_C,
$$
$$
m_\t = ( C^\t \coten C A)\coten C( C^\t \coten C A) \cong  C^\t \coten C A^\t \coten C A \to A_\t,
$$
$$
m_\t = (C^\t \coten C m^2_A)\circ (C^\t\coten C \mu \coten C t_1\coten C A)\circ (\Delta_C\coten C A^\t\coten C A).
$$
Then $( \cC, \cA_\t, m_\t, u_\t)$ is an internal category in $\M$ and it is the Kleisli object for $\t$.
\end{theorem}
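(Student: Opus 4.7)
The plan is two-step: first, verify directly that $(\cC,\cA_\t)$ is an internal category; second, establish the Kleisli universal property via Theorem~\ref{thm.meta1}. By the formal theory of monads \cite{LacStr:for}, every monad in $\KL{\Bicom \M}$ has a Kleisli object realised as a wreath product, so the second step amounts to identifying this wreath with $\Phi(\cC,\cA_\t)$ and checking that the universal property descends through the locally full embedding $\Phi$.

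First, I would verify that $(\cC,\cA_\t)$ is an internal category. The $\cC$-bicolinearity of $u_\t$ and $m_\t$ follows from the bicolinearity of $\eta$, $\mu$, $t_1$ and $m_A$, combined with the coassociativity of $\Delta_C$. Unitality of $m_\t$ with respect to $u_\t$ is a short calculation using the unit axiom $\mu * \eta t_0 = \mu * t_1 \eta = t$ of $\t$ together with the unitality of $m_A$. Associativity of $m_\t$ is the main bookkeeping: after expanding both $m_\t \circ (m_\t \coten C A_\t)$ and $m_\t \circ (A_\t \coten C m_\t)$, the equality reduces to the associativity of $\mu$, the associativity of $m_A$, the naturality of $\mu$ (which permits one to commute $\mu$ past $t_1$), and the multiplicativity of the internal endofunctor $\t$.

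For the universal property, the canonical Kleisli functor $\f_\t : (\cC,\cA) \to (\cC,\cA_\t)$ is defined with object component $\id_\cC$ and a morphism component that takes a generalised element $a\in A$, evaluates $\eta$ at the codomain of $a$, and composes in $\cA$; it carries a canonical $\t$-algebra structure coming from $\mu$. Given any $\t$-algebra $(\y,\sigma)$ in $(\cD,\cB)$, the required factorising internal functor $\widetilde\y : (\cC,\cA_\t) \to (\cD,\cB)$ has morphism part sending a generalised element $x\otimes a$ of $C^\t \coten C A$ to the $\cB$-composite of $\sigma$ applied to $x$ with $y_1(a)$; uniqueness is forced because all of $A_\t$ is generated by such pairs. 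This whole construction is the translation through $\Phi$ of the wreath product description of the Kleisli object for the monad $\Phi(\t)$ in $\KL{\Bicom \M}$, and the local fullness of $\Phi$ guarantees that the induced bijection of hom-categories extends correctly at the level of 2-cells.

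The main obstacle I anticipate is not the monoid verification, which is a routine (if tedious) diagram chase driven by the monad axioms, but rather pinning down the identification with the wreath product and confirming that the universal property transfers cleanly across $\Phi$. Since $\Phi$ is only locally full on 1-cells rather than fully faithful, extra care is needed to verify that $\t$-algebra structures on an internal functor $\y$ into $(\cD,\cB)$ correspond exactly to $\Phi(\t)$-algebra 2-cells on $(\cC^\y,\phi_\y) = \Phi(\y)$ in $\KL{\Bicom \M}$, so that the two $\t$-algebra categories match on either side of the embedding.
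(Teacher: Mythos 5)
Your proposal follows the same essential route as the paper: both pass through the embedding $\Phi$ of Theorem~\ref{thm.meta1}, identify $\Phi(\t)=(\cC^\t,\phi_\t)$ as a wreath in $\KL{\Bicom\M}$, and realise the Kleisli object as the Lack--Street wreath product with endomorphism part $C^\t\coten C A$. The differences are ones of emphasis. You propose to verify the monoid axioms for $(\cC,\cA_\t,m_\t,u_\t)$ by hand; the paper gets these for free, since the wreath product is automatically a monad in $\Bicom\M$ on $\cC$, and only checks that its multiplication and unit coincide with $m_\t$ and $u_\t$ --- your direct check is therefore redundant but harmless, and your list of ingredients (associativity of $\mu$ and $m_A$, naturality of $\mu$, multiplicativity of $\t$) is the right one. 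Conversely, the issue you flag at the end --- that $\Phi$ is locally full but not full on 1-cells, so the universal property of the wreath product in $\KL{\Bicom\M}$ does not restrict to $\Int\M$ without further argument --- is a genuine point that the stated proof glosses over; the paper in effect settles it only after the theorem, by exhibiting the explicit isomorphism $\Theta:\Alg\t(\cD,\cB)\to\Int\M((\cC,\cA_\t),(\cD,\cB))$, $\Theta(\y,\sigma)_1=m_B\circ(\sigma\coten D y_1)$, with inverse $\g\mapsto(\g\w,g_1\eps)$, and your formula for $\widetilde{\y}$ agrees with $\Theta(\y,\sigma)$. One small repair: the uniqueness argument ``$A_\t$ is generated by such pairs'' is not available in a general monoidal $\M$; it should be replaced by the observation that any internal functor $\g$ out of $(\cC,\cA_\t)$ is recovered from the pair $(\g\w,g_1\eps)$ using the multiplicativity and unitality of $g_1$ together with the cotensor (equaliser) description of $A_\t$.
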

\begin{proof}
Let $\Phi: \Int\M \to \KL{\Bicom\M}$ be the 2-functor defined in the proof of Theorem~\ref{thm.meta1}. Since $\t$ is a monad on $(\cC,\cA)$, $\Phi(\t) = (\cC^\t, \phi_t)$ is a monad on $\Phi(\cC,\cA) = (\cC,\cA)$ in the bicategory $\KL{\Bicom\M}$, i.e.\ it is the (Kleisli) {\em wreath} in terminology of \cite{LacStr:for}. As explained in \cite[Section~3]{LacStr:for} the Kleisli object for $\t$ is the {\em wreath product} of the monad $(\cC,\cA)$ with $\Phi(\t)$. This is a monad in $\Bicom\M$ on $\cC$ with the endomorphism part $C^\t \coten C A = A_\t$. The multiplication is the composite
$$
\xymatrix{C^\t \coten C A^\t \coten C A\ar[rr]^-{C^\t \coten {} \phi_\t \coten{} A} && C^{\t \t }\coten C A\coten C A \ar[r]^-{C^{\t \t }\coten {} m_A} & C^{\t \t }\coten C A \ar[r]^-{\Phi(\mu)\coten C A} & C^\t \coten C A\coten C A \ar[r]^-{C^\t \coten {} m_A} & A_\t ,}
$$
where the standard isomorphisms are already taken into account. An easy calculation confirms that this composite is equal to $m_\t$. The unit in the wreath product $A_\t$ is given by $\Phi(\eta) = u_\t$. 
\end{proof}

One of the consequences of Theorem~\ref{thm.kleisli} is the existence of the {\em Kleisli adjunction} $\w\dashv \v$, $\xymatrix{(\cC,\cA)\ar@<0.5ex>[r]^\w & (\cC, \cA_\t). \ar@<0.5ex>[l]^\v}$ Explicitly, the functors $\v = (r_0, r_1)$ and  $\w = (l_0, l_1)$ are
$$
r_0  = t_0, \qquad  r_1: A_\t\to A, \quad r_1 = m_A \circ (\mu \coten C t_1),
$$
$$
l_0 = C, \qquad l_1: A\to A_\t, \quad l_1 = (C^\t\coten C m_A)\circ (C \coten C \eta\coten C A)\circ \lambda^2.
$$
The unit of the adjunction is $\eta$ (the unit of the monad $\t$), while the counit is
$
\eps = (C^\t \coten C t)\circ \Delta_C$. Note that $(\t,\mu,\eta) = (\r\l, r_1\eps l_0, \eta)$, as required. Note further that the bi-natural isomorphism $\theta$ corresponding to the Kleisli adjunction $\w\dashv \v$ by Theorem~\ref{thm.adjoint} is the identity. This is in perfect accord with the intuition coming from the standard ({\em external}) category theory in which the Kleisli adjunction isomorphism is the identity by the very definition of morphisms in the Kleisli category.

The Kleisli adjunction can be used to describe explicitly the  isomorphism of categories 
$$
\Theta: \Alg\t (\cD,\cB) \to \Int \M \left(\left(\cC, \cA_\t\right), \left(\cD,\cB\right)\right),
$$
that is natural in $(\cD,\cB)$ and whose existence is the contents of Theorem~\ref{thm.kleisli}. Here $\Int \M \left(\left(\cC, \cA_\t\right), \left(\cD,\cB\right)\right)$ denotes the category with objects internal functors $\left(\cC, \cA_\t\right) \to  \left(\cD,\cB\right)$ and morphisms internal natural transformations. On objects $(\y,\sigma)$ of $\Alg\t (\cD,\cB)$, $\Theta (\y,\sigma)$ is an internal functor with components
$$
\Theta (\y,\sigma)_0 = y_0, \qquad \Theta (\y,\sigma)_1 = m_B\circ (\sigma \coten D y_1) : C^\t \coten C A \to B.
$$
On morphisms $\Theta$ is the identity map. The inverse of $\Theta$ is given on objects as follows. Take an internal functor $\g: (\cC, \cA_\t)\to (\cD,\cB)$, and define
$$
\Theta^{-1}(\g) = (\y, \sigma),
\qquad 
\y = \g\w, \qquad \sigma = g_1\eps l_0 = g_1\eps .
$$

A {\em comonad} in a 2-category $\mathbf{K}$ is the same as a monad in a vertically dual 2-category $\mathbf{K}_*$, obtained from $\mathbf{K}$ by reversing the   2-cells. Thus, in the case of $\Int \M$, a comonad on an internal category $(\cC,\cA)$ is the same as a monad in $\Int \M_*$, i.e.\ a triple $(\g , \delta, \eps)$, where $\g$ is an internal endofunctor on $(\cC,\cA)$ and $\delta: \g \Rightarrow \g\g$, $\eps: \g\Rightarrow  \id$ are internal natural transformations such that
$$
g_1\delta *\delta = \delta g_0 *\delta, \qquad g_1\eps * \delta = \eps g_0*\delta = g.
$$
Since, by Theorem~\ref{thm.meta1}, there is a (locally full) embedding $\Int \M_* \to \KL{\Bicom \M^*}$,  internal comonads have {\em co-Kleisli} objects. As was the case for monads, these are given in terms of a (co)wreath product, and the explicit form of the embedding $\hPhi$ in the proof of Theorem~\ref{thm.meta1}, yields

\begin{theorem}\label{thm.main.comonad} Let  $\g = (\g ,\delta, \eps)$ be a comonad on an internal category $(\cC,\cA)$ in $\M$. Then $\g$ has a co-Kleisli object $(\cC,{}_\g\cA)$, where 
$$
{}_\g A = A \coten C {}^\g C,
$$
with the unit 
$
u_\g : C\to {}_\g A$, $u_\g = (\eps \coten C C)\circ\Delta_C$, 
and mutliplication
$$
m_\g = ( A \coten C {}^\g C)\coten C( A \coten C {}^\g C) \cong  A \coten C {}^\g A \coten C {}^\g C \to {}_\g A,
$$
$$
m_\g = (m^2_A \coten C {}^\g C)\circ (A \coten C g_1 \coten C \delta \coten C {}^\g C)\circ (A \coten C {}^\g A \coten C\Delta_C ).
$$
\end{theorem}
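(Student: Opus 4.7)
The plan is to mirror the proof of Theorem~\ref{thm.kleisli}, this time invoking the second embedding from Theorem~\ref{thm.meta1}. A comonad $\g$ on $(\cC,\cA)$ in $\Int\M$ is, by definition, a monad on $(\cC,\cA)$ in the vertically dual 2-category $\Int\M_*$. Applying the locally full embedding $\hPhi: \Int\M_* \to \KL{\Bicom\M^*}$ one obtains a monad $\hPhi(\g) = ({}^\g\cC, \hphi_\g)$ on $(\cC,\cA)$ in $\KL{\Bicom\M^*}$, where $\hphi_\g = (g_1\coten C C)\circ \varrho_A$. In the language of Lack--Street this is a wreath, only computed inside $\Bicom\M^*$ rather than inside $\Bicom\M$.

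By \cite[Section~3]{LacStr:for} every monad in $\KL{\Bicom\M^*}$ admits a wreath product, which is a monad in $\Bicom\M^*$ serving as its Kleisli object. Because $\hPhi$ is a locally full embedding, this Kleisli universal property restricts to the co-Kleisli universal property for $\g$ in $\Int\M$. The problem therefore reduces to identifying the wreath product of $\hPhi(\g)$ explicitly. Reading the general formula through the horizontal reversal---so that composition of 1-cells $\cM, \cN$ in $\Bicom\M^*$ becomes $\cN\coten C \cM$ in $\Bicom\M$---one finds the underlying $\cC$-bicomodule to be $A\coten C {}^\g C = {}_\g A$, the unit to be $\hPhi(\eps) = u_\g$, and, after substituting $\hphi_\g$ and the image of $\delta$ under $\hPhi$, and using the canonical identification $(A\coten C {}^\g C)\coten C(A\coten C {}^\g C)\cong A\coten C {}^\g A\coten C {}^\g C$, the multiplication collapses to precisely the composite defining $m_\g$.

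The principal obstacle will be purely bookkeeping: because both a vertical dualisation ($\Int\M\rightsquigarrow\Int\M_*$) and a horizontal one ($\Bicom\M\rightsquigarrow\Bicom\M^*$) are in play, one must monitor carefully which tensor factors sit to the left and which to the right, and on which side $\hphi_\g$ interleaves with $m_A$ and $\delta$. Once this translation dictionary is fixed, associativity and unitality of $m_\g$ follow from the corresponding axioms of the Lack--Street wreath product, exactly as in the proof of Theorem~\ref{thm.kleisli}. The explicit co-Kleisli adjunction and the isomorphism with the category of $\g$-coalgebras can then be read off by dualising the formulas that appear immediately after Theorem~\ref{thm.kleisli}.
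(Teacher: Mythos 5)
Your proposal is correct and follows essentially the same route as the paper: the paper likewise derives this theorem by passing a comonad on $(\cC,\cA)$ through the embedding $\hPhi: \Int\M_* \to \KL{\Bicom\M^*}$ of Theorem~\ref{thm.meta1} and reading off the (co)wreath product of Lack--Street, exactly mirroring the proof of Theorem~\ref{thm.kleisli}. The left--right bookkeeping you flag is indeed the only content beyond that, and the paper treats it the same way.
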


It has been observed already in \cite{EilMoo:adj} that a monad structure on a functor induces a comonad structure on its adjoint. This observation remains true in any 2-category, and the relationship between monad and comonad structures is that of being {\em mates under the adjunction} \cite{KelStr:rev}. It is also well known that the Kleisli category of a monad with a left adjoint is isomorphic to the co-Kleisli category of its comonad mate. The same property is enjoyed by monads on an internal category.
More explicitly, consider an adjunction $\l \dashv \r : (\cC,\cA)\to  (\cC,\cA)$ with unit $\iota$ and counit $\sigma$. If $(\r,\mu,\eta)$ is a monad, then $\l$ is a comonad with comultiplication $\delta$ being a mate of $\mu$,
$$
\delta = \sigma l_0^2 *l_1\mu l_0^2*l_1r_1\iota l_0 *l_1\iota,
$$
and the counit $\eps$ being a mate of $\eta$, $\eps = \sigma *l_1\eta$. By \cite[2.14~Theorem]{Lau:fro}, the Kleisli object $A_\r$ of $(\r,\mu,\eta)$ is isomorphic to the co-Kleisli object ${}_\l A$ of $(\l,\delta,\eps)$. In particular, the adjunction isomorphism
$
\theta:\!\!\xymatrix{A_\r = C^\r\coten C A \ar[r]^\simeq & A\coten C {}^\l C= {}_\l A, }
$
constructed in the proof of Theorem~\ref{thm.adjoint} is an isomorphism of internal categories.

\subsection{Kleisli objects in $\Intc \M$}
An internal cofunctor $\l : (\cC,\cA)\to (\cD,\cB)$ is said to be a {\em left adjoint} of a cofunctor $\r : (\cD,\cB)\to (\cC,\cA)$ in case $\l \dashv \r$ form an adjunction in $\Intc \M$. 
A monad in the 2-category $\Intc \M$ is called an {\em internal opmonad} (or simply an {\em opmonad}). Thanks to the existence of the locally full embedding $\Psi: \Intc\M \to \KL{\Bicom \M}$ described in (the proof of) Theorem~\ref{thm.meta2}, every opmonad has its Kleisli internal category, which is given in terms of a wreath product. Formally, one can formulate the following

\begin{theorem}\label{thm.kleisli.op}
Let  $\t = (\t ,\mu, \eta)$ be an opmonad on an internal category $(\cC,\cA)$ in $\M$. Then the  $\cC$-bicomodule ${}^\t\! \cA$ is a monoid in ${}^\cC \M^\cC$ with unit $\eta$ and multiplication
$$
m^\t = m^2_A\circ ( \mu \coten C t_1\coten C A)\circ (\lambda \coten C \lambda).
$$
The internal category $( \cC,{}^\t\!\cA, m^\t, \eta)$  is the Kleisli object for $\t$ (in $\Intc\M$).
\end{theorem}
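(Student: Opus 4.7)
The plan is to mirror the proof of Theorem~\ref{thm.kleisli}, substituting the embedding $\Psi: \Intc\M \to \KL{\Bicom\M}$ of Theorem~\ref{thm.meta2} for $\Phi$. Since $\Psi$ is a 2-functor, applying it to the opmonad $\t = (\t,\mu,\eta)$ on $(\cC,\cA)$ produces a monad $\Psi(\t) = ({}^\t C, t_1)$ on the 0-cell $(\cC,\cA)$ in the bicategory $\KL{\Bicom\M}$, with multiplication and unit inherited from $\mu$ and $\eta$ (these transport trivially, since $\Psi$ is the identity on 2-cells under the standard identification ${}^\g B\cong {}^\g D\coten D B$). This presents the opmonad as a wreath in the sense of \cite{LacStr:for}.

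Next I would invoke the wreath-product construction of \cite[Section~3]{LacStr:for}: any monad in $\KL{\Bicom\M}$ has a Kleisli object in $\Bicom\M$, namely the wreath product, and this wreath product is a monoid in ${}^\cC\M^\cC$, i.e.\ an internal category in $\M$. The underlying $\cC$-bicomodule of the wreath product associated with $\Psi(\t)$ is obtained by horizontally composing the underlying bicomodule $A$ of $\cA$ with ${}^\t C$, yielding ${}^\t C\coten C A$, which under the canonical isomorphism afforded by the left coaction $\lambda$ is identified with the bicomodule ${}^\t\!A$ underlying ${}^\t\!\cA$.

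The third step is to verify that the unit and multiplication produced by the wreath-product recipe match the stated $\eta$ and $m^\t$. The unit is $\Psi(\eta)$, which is just $\eta$. For the multiplication, one writes the standard wreath composite — use the structure map $t_1$ to move a copy of $A$ past ${}^\t C$, apply $\mu$ to fuse the two copies of ${}^\t C$, and fold the three resulting copies of $A$ via $m_A^2$ — and then transports it through the identification ${}^\t C\coten C A\cong {}^\t\!A$ effected by $\lambda$, recovering precisely
$$
m^\t = m^2_A\circ(\mu\coten C t_1\coten C A)\circ(\lambda\coten C\lambda).
$$

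Finally, the universal property in $\Intc\M$ is extracted from the universal property in $\KL{\Bicom\M}$ by appealing to local fullness of $\Psi$: a $\t$-algebra in $\Intc\M$ transports to a $\Psi(\t)$-algebra in $\KL{\Bicom\M}$, which by the wreath universal property factors uniquely through $(\cC,{}^\t\!\cA)$, and the corresponding 1-cell pulls back to a unique cofunctor out of $(\cC,{}^\t\!\cA)$ because $\Psi$ is a locally full embedding. The main obstacle I anticipate is not conceptual but bookkeeping: keeping track of which side of each cotensor carries the twisted left coaction and confirming that the wreath composite passes consistently through the identification ${}^\t C\coten C A\cong {}^\t\!A$ on both tensor factors to produce the displayed formula for $m^\t$.
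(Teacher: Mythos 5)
Your proposal is correct and follows essentially the same route as the paper: the paper's proof likewise applies the embedding $\Psi$ of Theorem~\ref{thm.meta2} to present the opmonad as a wreath, forms the wreath product of \cite[Section~3]{LacStr:for}, and reads off the unit and multiplication through the identification ${}^\t C\coten C A\cong {}^\t\!A$. The only difference is one of emphasis: you spell out the transport of the universal property via local fullness, which the paper handles in the remarks immediately preceding the theorem rather than inside the proof itself.
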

\begin{proof}
This is simply the wreath product of $(\cC,\cA)$ with $\Psi(\t) = ({}^\t\cC, t_1)$, once the identification ${}^\t C \coten C A \cong {}^\t\! A$ is taken into account. Since $\Psi$ is the identity on natural cotransformations, the unit for ${}^\t\! \cA$ is the same as the unit of the opmonad, and the multiplication of the opmonad enters the wreath product unmodified. 
\end{proof}

\section{Examples} \label{sec.ex}
The aim of this section is to explore the meaning of internal adjunctions and internal (co)Kleisli categories in purely algebraic terms.  As the Kleisli objects considered here are wreath products in the opposite of the category of modules (over a commutative ring) the examples described in this section form a (dual) special case of the situation studied in \cite{ElK:ext}.
Throughout this section, $k$ is a commutative ring, the multiplication in $k$-algebras is denoted by juxtaposition and the unit is denoted by $1$. The unadorned tensor product is the usual tensor product of $k$-modules.

\subsection{Twisting of corings and algebras}
Let $A$ be a $k$-algebra. An {\em $A$-coring twisting datum} is a triple 
$$
\left( \xymatrix{ \cD \ar@<0.5ex>[r]^-{l} & \cC\ar@<0.5ex>[l]^-{r}}, \theta\right),
$$
in which $\cC = (C,\Delta_C, e_C)$ and $\cD = (D,\Delta_D, e_D)$ are $A$-corings, $l,r$ are maps of $A$-corings and $\theta : D^l \to {}^r\! C$ is an isomorphism of $\cD$-$\cC$-bicomodules. Recall that by an $A$-coring map we mean a morphism of comonoids in the monoidal category of $A$-bimodules. Thus $l$ and $r$ are $A$-bimodule maps compatible with comultiplications and counits. Recall further that $D^l$ means a $\cD$-$\cC$ bicomodule with the left coaction given by the comultiplication $\Delta_D$ and with the right coaction modified by $l$, $(D\ot_A l)\circ \Delta_D$. The meaning of ${}^r\! C$  is similar (but now left coaction is modified by $r$).

The readers can easily convince themselves that an $A$-coring twisting datum is the same as the internal adjunction $(A,l)\dashv (A,r)$ from $(A,\cC)$ to $(A,\cD)$ in $\Int {\mod_k^{op}}$. The bicomodule map property of $\theta$ is precisely the same as its bi-naturality. With this interpretation in mind, with no effort and calculations one proves
\begin{lemma}
In an $A$-coring twisting datum $
\left( \xymatrix{ \cD \ar@<0.5ex>[r]^-{l} & \cC\ar@<0.5ex>[l]^-{r}}, \theta\right)$, in which $e_D\circ \theta^{-1}$ and $e_C\circ\theta$ are units in convolution algebras $\lrhom A A {C} A$ and $\lrhom A A {D} A$ respectively, the $A$-corings $\cD$ and $\cC$ are mutually isomorphic.
\end{lemma}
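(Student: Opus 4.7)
The plan is to show that the coring map $l:D\to C$ is itself an $A$-bimodule isomorphism; since $l$ is already a morphism of $A$-corings, it will then automatically be a coring isomorphism $\cD\to\cC$.

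First I would extract the two counit-level consequences of the bicolinearity of $\theta^{-1}:{}^r\! C\to D^l$. Writing $\alpha:=e_D\circ\theta^{-1}\in\lrhom A A C A$ and using the Sweedler notation $\Delta_C(c)=c\sw 1\ot c\sw 2$, $\Delta_D(d)=d\sw 1\ot d\sw 2$, the image of the left $\cD$-colinearity equation for $\theta^{-1}$ under $D\ot_A e_D$ gives
\[
\theta^{-1}(c)\;=\;r(c\sw 1)\,\alpha(c\sw 2),
\]
while the image of the right $\cC$-colinearity equation for $\theta^{-1}$ under $e_D\ot_A C$ gives
\[
l\bigl(\theta^{-1}(c)\bigr)\;=\;\alpha(c\sw 1)\,c\sw 2.
\]
These two identities are the only structural inputs from $\theta$ that the remainder of the argument requires.

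Next I would use the hypothesis that $\alpha$ is a unit in the convolution algebra $\lrhom A A C A$, with inverse $\alpha^{-1}$ satisfying $\alpha^{-1}*\alpha=\alpha*\alpha^{-1}=e_C$, to build a bimodule inverse for the endomorphism $F:=l\circ\theta^{-1}$ of $C$. A short Sweedler calculation using coassociativity and counitality of $\cC$ confirms that the $A$-bimodule map $F^{-1}:C\to C$, $F^{-1}(c):=\alpha^{-1}(c\sw 1)\,c\sw 2$, is two-sided inverse to $F$. Since $\theta$ is already a bimodule isomorphism, it then follows that $l=F\circ\theta$ is a bijection of $A$-bimodules, and therefore an isomorphism of $A$-corings $\cD\to\cC$.

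The second hypothesis, that $\beta:=e_C\circ\theta$ is a unit in $\lrhom A A D A$, gives by the mirror-image argument (applying counits to the bicolinearity of $\theta$ itself, which produces the identity $r\circ\theta(d)=d\sw 1\beta(d\sw 2)$) that $r:\cC\to\cD$ is likewise a coring isomorphism; strictly speaking either one of the two hypotheses is already sufficient for the stated conclusion, but together they yield the stronger statement that both halves of the twisting datum are coring isomorphisms. The only real obstacle is the Sweedler bookkeeping in the first step: keeping straight which counit is applied to which tensor factor of which colinearity equation. Once the two identities above are in hand, the rest is a one-line manipulation in the convolution algebra. A more conceptual route would identify $\alpha$ and $\beta$ with the counit-level data of the adjunction $(A,l)\dashv(A,r)$ in $\Int{\mod_k^{op}}$ and invoke the standard $2$-categorical fact that an adjunction is an adjoint equivalence precisely when its unit and counit are invertible $2$-cells.
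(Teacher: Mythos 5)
Your proof is correct, but it takes the computational route where the paper deliberately takes the conceptual one. The paper's proof is essentially your closing remark made official: it observes (via the proof of Theorem~\ref{thm.adjoint}) that $e_D\circ\theta^{-1}$ and $e_C\circ\theta$ \emph{are} the unit and counit of the adjunction $(A,l)\dashv(A,r)$ in $\Int{\mod_k^{op}}$, notes that convolution invertibility is exactly invertibility as $2$-cells, and concludes that the adjunction is an adjoint equivalence, so $l$ and $r$ are isomorphisms; it then records the explicit formula $l^{-1}=\bar u * r * u$ with $u=e_D\circ\theta^{-1}$. Your two counit-level identities $\theta^{-1}(c)=r(c\sw 1)\alpha(c\sw 2)$ and $l(\theta^{-1}(c))=\alpha(c\sw 1)c\sw 2$ are correct consequences of the bicolinearity of $\theta^{-1}$, your inverse $F^{-1}(c)=\alpha^{-1}(c\sw 1)c\sw 2$ checks out, and composing gives $l^{-1}=\theta^{-1}\circ F^{-1}:c\mapsto\alpha^{-1}(c\sw 1)\,r(c\sw 2)\,\alpha(c\sw 3)$, which is precisely the paper's $\bar u*r*u$ --- so the two arguments agree on the nose. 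What your version buys is two things: it is self-contained (no appeal to the adjunction dictionary or to the $2$-categorical fact about adjoint equivalences), and it makes visible that the hypothesis on $e_D\circ\theta^{-1}$ alone already forces $l$ to be an isomorphism, hence suffices for the stated conclusion $\cC\cong\cD$; the paper needs both hypotheses because its argument goes through the adjoint equivalence, which requires both $2$-cells to be invertible. What the paper's version buys is exactly what the surrounding text advertises (``with no effort and calculations one proves''): the Sweedler bookkeeping you flag as the only real obstacle is absorbed entirely into the already-established Theorem~\ref{thm.adjoint}.
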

\begin{proof}
In view of the proof of Theorem~\ref{thm.adjoint}, $e_D\circ \theta^{-1}$ is a unit of adjunction $(A,l)\dashv (A,r)$ and $e_C\circ\theta$ is the counit of $(A,l)\dashv (A,r)$. To require that they have convolution inverses is the same as to require that they are invertible 2-cells in $\Int {\mod_k^{op}}$. Thus the $A$-coring maps $l$ and $r$ must be isomorphisms. Explicitly, writing $u$ for $e_D\circ \theta^{-1}$,  $\bar{u}$ for its convolution inverse and $*$ for the convolution product, the inverse of $l$ is $l^{-1} = \bar{u}*r*u$.
\end{proof} 

The internal category interpretation of an  $A$-coring twisting datum $
\left( \xymatrix{ \cD \ar@<0.5ex>[r]^-{l} & \cC\ar@<0.5ex>[l]^-{r}}, \theta\right),
$ implies immediately the existence of   twisted $A$-coring structures on $\cC$ and $\cD$.
\begin{description}
\item[Twisting of $(C,\Delta_C,e_C)$]  The $A$-bimodule $C$ is an $A$-coring with comultiplication and counit
$$
\Delta_C^\theta = (\theta\circ r\ot_A C)\circ\Delta_C, \qquad e_C^\theta = e_D\circ \theta^{-1}.
$$
$\cC$ with this coring structure is denoted by $\cC_\theta$.
\item[Twisting of $(D,\Delta_D,e_D)$]  The $A$-bimodule $D$ is an $A$-coring with comultiplication and counit
$$
\Delta_D^\theta = (D\ot_A \theta^{-1}\circ l)\circ\Delta_D, \qquad e_D^\theta = e_C\circ \theta.
$$
$\cD$ with this coring structure is denoted by $\cD^\theta$.
\end{description}
$\cC_\theta$ is simply the Kleisli object of the monad associated to the adjunction $(A,l)\dashv (A,r)$, while $\cD^\theta$ is the co-Kleisli object of the corresponding comonad. The Kleisli adjunction yields a new $A$-coring twisting datum
 $$
\left( \xymatrix{ \cC_\theta \ar@<0.5ex>[r]^-{\bar{l}} & \cC\ar@<0.5ex>[l]^-{\bar{r}}, \bar{\theta}}\right),
$$
in which $\bar{r} = \theta\circ r$, $\bar{l} = (e_D\circ \theta^{-1}\ot_A C)\circ \Delta_C$ and $\bar{\theta}$ is the identity morphism on $C$. As explained in \cite[Section~1]{LacStr:for}, the process of associating Kleisli objects terminates after one step, hence this new twisting $A$-coring datum does not produce any new corings. More specifically, 
$$
\cC_{\bar{\theta}} = \cC_\theta \quad \mbox{and} \quad (\cC_\theta)^{\bar{\theta}} =\cC.
$$

Since a $k$-coalgebra is an example of a $k$-coring, the above constructions can be specified to $k$-coalgebras. In a similar way one can introduce an algebra twisting datum as a pair of $k$-algebra morphisms supplemented with a bimodule isomorphism (with bimodule structures suitably induced by algebra maps). To interpret such a datum as an internal adjunction one should (formally) assume that $k$ is a field, but algebraic arguments work for any commutative ring $k$. This is in fact an ``external'' construction since any $k$-algebra is the same as a $k$-linear category with a single object, and hence an algebra twisting datum is simply an adjunction (in the most classical sense) between $k$-linear categories with single objects.

\subsection{Sweedler's corings and Hopf-Galois extensions}
The aim of this section is to illustrate the construction of Kleisli objects on a very specific example and to make a connection with the notions appearing in Hopf-Galois theory. 

 To any  morphism of $k$-algebras $B\to A$ one associates the canonical Sweedler $A$-coring $A\ot_B A$ with coproduct and counit
$$
\Delta(a\ot_B a') = a\ot_B 1_A\ot_A 1_A\ot_B a', \qquad e(a\ot_B a') = aa'.
$$
This way $(A, A\ot_B A)$ becomes an internal category in $\mod_k^{op}$. To illustrate the construction of Kleisli objects we will study internal endofunctors $\t$ on $(A, A\ot_B A)$ which operate trivially on the object component of $(A, A\ot_B A)$, i.e.\ $\t = (A, t_1)$. We freely use the standard isomorphism, for all $A$-bimodules $M$,
$$
\lrhom AA{A\ot_B A} M \cong M^B := \{m\in M\; |\; \forall b\in B, \; bm=mb\}.
$$
With this identification at hand, a monad (which is an identity of the object of objects $A$) is a triple $t := \sum_i s_i\ot_B  t_i \in (A\ot_B A)^B$, $m,u\in A^B$ satisfying the following equations:
\begin{description}
\item[(a) $t$ is a functor] $\sum_i s_it_i =1$, $\sum_{i,j} s_i\ot_B t_is_j\ot_B t_j =\sum_i s_i\ot_B 1_A\ot_B t_i$;
\item[(b) naturality of multiplication] $tm = mt^2$;
\item[(c) naturality of unit] $tu = u\ot_B 1_A$;
\item[(d) the associative law] $m^2 = \sum_i ms_imt_i$;
\item[(e) the unit law] $mu = \sum_i ms_iut_i =1_A$.
\end{description}
In (b), $t^2 = \sum_i s_itt_i =\sum_{i,j} s_is_j \ot_B t_jt_i$, i.e.\ it is the square of $t$ with respect to the natural multiplication in $(A\ot_B A)^B$. For example, any unit $u\in A^B$ defines a monad on $(A, A\ot_B A)$ as a triple $(u\ot_B u^{-1}, u^{-1}, u)$. That is, for all $a,a'\in A$,  $t_0(a) = a$, $t_1(a\ot_B a') = au\ot_B u^{-1}a'$, $\mu (a\ot_B a') = au^{-1}a'$, $\eta(a\ot_B a') = aua'$. 

The Kleisli coring $(A\ot_B A)_\t$ associated to the datum $(t,m,u)$ equals $A\ot_B A$ as an $A$-bimodule, but has modified comultiplication and counit,
$$
\Delta_t (a\ot_B a') = amt\ot_B a', \qquad e_t(a\ot_B a') = aua'.
$$
\begin{lemma}\label{lem.cor}
The element $mt$ is a group-like element in $(A\ot_B A)_\t$.
\end{lemma}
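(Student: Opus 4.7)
The plan is to verify the two defining conditions of a group-like element for $g = mt$ in the twisted coring $(A\ot_B A)_\t$: namely $\Delta_t(mt) = mt\ot_A mt$ and $e_t(mt) = 1_A$. The counit condition is immediate: expanding $mt = \sum_i ms_i\ot_B t_i$ and applying the formula $e_t(a\ot_B a') = aua'$ gives $e_t(mt) = \sum_i ms_iut_i$, which equals $1_A$ by the unit law (e).

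For the comultiplication condition, a direct expansion yields $\Delta_t(mt) = \sum_{i,j} ms_ims_j\ot_B t_j\ot_B t_i$, while under the standard identification $(A\ot_B A)\ot_A(A\ot_B A)\cong A\ot_B A\ot_B A$ the element $mt\ot_A mt$ reads $\sum_{i,j}ms_i\ot_B t_ims_j\ot_B t_j$. Matching these two $3$-fold tensors directly from axioms (a)--(e) is clumsy, since one would need the $2$-tensor naturality $tm = mt^2$ of (b) lifted into a $3$-tensor identity. Instead, I plan a detour through the coassociativity of $\Delta_t$ guaranteed by Theorem~\ref{thm.kleisli}: the Kleisli object $(\cC,\cA_\t)$ is a genuine internal category, so its underlying coring is truly coassociative.

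Concretely, I first compute $\Delta_t(1\ot_B 1) = mt\ot_B 1$, which under the above iso corresponds to $mt\ot_A(1\ot_B 1)\in (A\ot_B A)\ot_A(A\ot_B A)$. Applying the coassociativity relation $(\Delta_t\ot_A\id)\circ\Delta_t = (\id\ot_A\Delta_t)\circ\Delta_t$ at $1\ot_B 1$ then yields
\[
\Delta_t(mt)\ot_A(1\ot_B 1) \;=\; mt\ot_A mt\ot_A(1\ot_B 1) \quad \text{in } (A\ot_B A)\ot_A(A\ot_B A)\ot_A(A\ot_B A).
\]
To strip off the trailing $1\ot_B 1$, I apply the $A$-bimodule map $\id\ot_A\id\ot_A e$, where $e\colon A\ot_B A\to A$, $e(a\ot_B a') = aa'$, is the \emph{untwisted} Sweedler counit. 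Since $e(1\ot_B 1) = 1_A$, the last factor collapses and the equation reduces to $\Delta_t(mt) = mt\ot_A mt$, completing the verification.

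The main subtlety I anticipate is the bookkeeping around the canonical isomorphism $(A\ot_B A)\ot_A(A\ot_B A)\cong A\ot_B A\ot_B A$, and the choice of the untwisted $e$ (rather than $e_t$) in the final cancellation: applying $e_t$ would only give the equality up to right-multiplication by $u = e_t(1\ot_B 1)$, which need not be a unit of $A$.
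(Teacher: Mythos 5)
Your proof is correct, but it takes a genuinely different route from the paper's. The paper verifies $\Delta_t(mt)=mt\ot_A mt$ by a direct five-step computation in $A\ot_B A\ot_B A$, starting from $\Delta_t(mt)=\sum_i ms_imt\ot_B t_i$ and invoking, in turn, the cocycle-type identity in (a), the associative law (d), (a) again, and the naturality (b) in the form $tm=mt^2$ --- precisely the ``lifted'' $3$-tensor manipulation you judged clumsy and avoided. You instead derive the identity formally from the coassociativity of $\Delta_t$, which Theorem~\ref{thm.kleisli} guarantees, evaluated at $1\ot_B 1$ and followed by cancellation of the trailing factor via the \emph{untwisted} counit $e$; your bookkeeping of the isomorphism $(A\ot_B A)\ot_A(A\ot_B A)\cong A\ot_B A\ot_B A$ and your choice of $e$ rather than $e_t$ are both correct, and the counit computation $e_t(mt)=\sum_i ms_iut_i=1$ via (e) coincides with the paper's. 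The trade-off is worth noting: since $\Delta_t(a\ot_B a')=amt\ot_B a'$ is entirely determined by $mt$, the group-likeness of $mt$ is essentially \emph{equivalent} to the coassociativity and left counitality of $(A\ot_B A)_\t$, so your argument, while logically sound (Theorem~\ref{thm.kleisli} is established independently by the wreath-product machinery of Lack--Street), merely unpacks that general theorem in this instance; the paper's computation instead exhibits concretely how the elementary axioms (a), (b), (d), (e) conspire to produce the group-like element, which is the illustrative point of the example section. Your version is shorter and more conceptual; the paper's is self-contained at the level of the algebraic data $(t,m,u)$ and does not presuppose that the displayed $\Delta_t$, $e_t$ really is the specialisation of the general Kleisli structure.
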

\begin{proof}
The unit law (e) implies immediately that $e_t(mt) =1$. Furthermore,
\begin{eqnarray*}
\Delta_t(mt) &=& \sum_i ms_imt\ot_B t_i = \sum_{i,j} ms_i m t_i s_jt\ot_B t_j\\
&=& \sum_i m^2 s_it\ot_B t_i = \sum_i m^2 s_i tt_i t\\
&=&\sum_im^2 t^2s_i\ot_B t_i = mt\ot_A mt,
\end{eqnarray*}
where the second and fourth equalities follow by (a), the third one by (d) and the final one by (b).
\end{proof} 

The Kleisli adjunction comes out as
$$
l_1: A\ot_B A \to (A\ot_B A)_t, \qquad a\ot_B a' \to amta',
$$
$$
r_1: (A\ot_B A)_t \to A\ot_B A, \qquad a\ot_B a' \to  au\ot_B a'.
$$
Lemma~\ref{lem.cor} makes is clear that $l_1$ is an $A$-coring map as required. That $r_1$ is an $A$-coring map follows by (c) and (e). Of course, both statements follow by general arguments described in Section~\ref{sec.Kleisli}. The unit of the Kleisli adjunction is given by $e_t$ and the counit by $e$. 

This description of Kleisli objects for monads on the Sweedler canonical coring can be transferred to Hopf-Galois extensions. Let $H$ be a Hopf algebra, and $A$ be a right $H$-comodule algebra with coaction $\varrho^A : A\to A\ot H$. Let $B = A^{coH} = \{a\in A\; |\; \varrho^A(a) = a\ot 1_H\}$ be the coinvariant subalgebra of $A$. Then the inclusion $B\subseteq A$ is called a {\em Hopf-Galois extension} provided the canonical map
$$
\can: A\ot_B A\to A\ot H, \qquad a'\ot_B a \mapsto a'\varrho^A(a),
$$
is an isomorphism; see e.g.\ \cite[Chapter~8]{Mon:Hop}. The restricted inverse of the canonical map
$$
\tau: H\to A\ot_BA, \qquad h\mapsto \can^{-1}(1_A\ot h),
$$
is called the {\em translation map}. The translation map has several nice properties (see \cite[3.4~Remark]{Sch:rep}), which in particular imply that, for any group-like element $x\in H$ (i.e.\ an element such that $\Delta_H(x) = x\ot x$ and $e_H(x) =1$), $\tau(x) \in (A\ot_B A)^B$ satisfies conditions (a). Thus any group-like element $x\in H$ induces an endofunctor on $A\ot_B A\cong A\ot H$. Using further properties of the translation map, one finds that an element $m\in A^B$ satisfies condition (b) if and only if 
$$
\varrho^A (m) = m\ot x.
$$
Similarly, an element $u\in A^B$ satisfies condition (c) if and only if
$$
\varrho^A (u) = u\ot x^{-1},
$$
(note that this equality makes sense, since every group-like element of a Hopf algebra is necessarily a unit). The Galois condition induces an action of $H$ on $A^B$, known as the {\em Miyashita-Ulbrich action} \cite{Ulb:Gal}, \cite{DoiTak:Hop}. For any $h\in H$, write $\tau(h) = \sum_i h \suc 1_i \ot_B h\suc 2 _i$. Then, for any $a\in A^B$, the Miyashita-Ulbrich action is given by
$$
a\ra h := \sum \sum_i h \suc 1_i a h\suc 2 _i.
$$
In the Hopf-Galois case, the associative and unital laws (d) and (e) come out as
$$
m^2 = m(m\ra h), \qquad mu = m(u\ra x) =1.
$$

\section*{Acknowledgements} 
Some of the results of this paper were presented at CT2009 in Cape Town. The authors are grateful to many participants of CT2009, in particular to Gabriella B\"ohm and Ross Street, 
for comments and suggestions. 
T.\ Brzezi\'nski would like to acknowledge the International Travel Grant from  the Royal Society (U.K.). The research of A.\ Vazquez Marquez is supported by the CONACYT  grant no.\  208351/303031

\end{document}